\documentclass[a4paper,11pt,reqno]{amsart}
\usepackage{amsmath}
\usepackage{amsthm,enumerate}
\usepackage{mathtools}
\usepackage{enumitem}
\usepackage{graphicx}
\usepackage{amssymb}

\usepackage[toc,page]{appendix}

\usepackage[utf8]{inputenc}

\usepackage{color}






%


\usepackage{url}

\usepackage[text={6.5in,8.6in},centering]{geometry}

\usepackage{srcltx}

\usepackage[T1]{fontenc}




\theoremstyle{plain}
\newtheorem{thm}{Theorem}[section]
\theoremstyle{plain}
\newtheorem{lem}[thm]{Lemma}
\newtheorem{assum}[thm]{Assumption}
\newtheorem{prop}[thm]{Proposition}
\newtheorem{cor}[thm]{Corollary}

\theoremstyle{definition}
\newtheorem{defi}{Definition}[section]
\newtheorem{rem}{Remark}[section]

\newcommand{\R}{\mathbb{R}}
\newcommand{\rn}{\mathbb{R}^{N}}

\newcommand{\hn}{M}
\newcommand{\hnn}{M}
\newcommand{\K}{{\mathbb K}}

\renewcommand{\c}{\mathsf{c}}

\newcommand{\RR}{\mathbb{R}}

\newcommand{\G}{\mathbb{G}^s}
\newcommand{\GM}{\mathbb{G}_{M}^s}

\newcommand{\GH}{\mathbb{G}_{M}^s}

\newcommand{\ka}{\overline{\kappa}}

\newcommand{\dt}{\,{\rm d}t}
\newcommand{\rd}{{\rm d}}

\newcommand{\dg}{\rd\mu_{M}}

\newcommand{\nero }{\color{black}}

\parskip 4pt
\parindent 6pt


\numberwithin{equation}{section} \allowdisplaybreaks



\definecolor{darkblue}{rgb}{0.05, .05, .65}
\definecolor{darkgreen}{rgb}{0.1, .65, .1}
\definecolor{darkred}{rgb}{0.8,0,0}

\begin{document}
 \title[Fractional fast diffusion equations on $\hn$]{Smoothing effects and extinction in finite time for fractional fast diffusions on Riemannian manifolds}
\author{Elvise Berchio}
\address{\hbox{\parbox{5.7in}{\medskip\noindent{Dipartimento di Scienze Matematiche, \\
Politecnico di Torino,\\
        Corso Duca degli Abruzzi 24, 10129 Torino, Italy. \\[3pt]
        \em{E-mail address: }{\tt elvise.berchio@polito.it}}}}}

\author{Matteo Bonforte}
\address{\hbox{\parbox{5.7in}{\medskip\noindent{Departamento de Matem\'aticas,   Universidad Aut\'onoma de Madrid, and \\
ICMAT - Instituto de Ciencias Matem\'{a}ticas, CSIC-UAM-UC3M-UCM,\\
Campus de Cantoblanco, 28049 Madrid, Spain.
 \\[3pt]
        \em{E-mail address: }{\tt matteo.bonforte@uam.es}}}}}

\author{Gabriele Grillo}
\address{\hbox{\parbox{5.7in}{\medskip\noindent{Dipartimento di Matematica,\\
Politecnico di Milano,\\
   Piazza Leonardo da Vinci 32, 20133 Milano, Italy. \\[3pt]
        \em{E-mail address: }{\tt
          gabriele.grillo@polimi.it}}}}}

\date{}

\keywords{Fractional fast diffusion equation; fractional Laplacian; Riemannian manifolds; a priori estimates; curvature bounds; Sobolev inequality; smoothing effects}

\subjclass[2010]{Primary: 35R01. Secondary: 35K65, 35A01, 35R11, 58J35.}

\begin{abstract}
We study nonnegative solutions to the Cauchy problem for the Fractional Fast Diffusion Equation on a suitable class of connected, noncompact Riemannian manifolds. This parabolic equation is both singular and nonlocal: the diffusion is driven by the (spectral) fractional Laplacian on the manifold, while the nonlinearity is a concave power  that  makes the diffusion singular, so that solutions lose mass and may extinguish in finite time. Existence of mild solutions follows by nowadays standard nonlinear semigroups techniques, and we use these solutions as the building blocks for a more general class of so-called weak dual solutions, which allow for data both in the usual $L^1$ space and in a larger weighted space, determined in terms of the fractional Green function. We focus in particular on a priori smoothing estimates (also in weighted $L^p$ spaces) for a quite large class of weak dual solutions. We also show pointwise lower bounds for solutions, showing in particular that solutions have infinite speed of propagation. Finally, we start the study of how solutions extinguish in finite time, providing suitable sharp extinction rates.
\end{abstract}

\maketitle


\normalsize

\section{Introduction}
Let $M$ be an $N$-dimensional ($N\geq 2$) complete, connected, noncompact Riemannian manifold. We study \emph{nonnegative} solutions to the Fractional Fast Diffusion Equation (FFE):
\begin{equation}\label{FFE}
\left \{ \begin{array}{ll}
\partial_t u + (- \Delta_{\hn})^{s} \!\left(u^m\right)= 0 &  (t,x)\in  (0, \infty) \times \hn\,,\\
u(0,x)=u_0(x)\ge0\,, & x\in \hn,
\end{array}
\right.
\end{equation}
where  $(-\Delta_{\hn})^s$ denotes the (spectral) fractional Laplacian on $\hn$, $0 < s < 1$,  $0<m < 1$. The novelty of the present paper lies in the range of exponents for $m$, which corresponds to the so-called \it fractional fast diffusion\rm, as opposed to the case $m>1$ in which the equation is known as the \it fractional porous medium equation\rm. Here, the
 fractional operator can be defined by functional calculus, and also, on a suitable set of functions, by a more explicit formula involving the (minimal) heat kernel of $M$, see formula \eqref{Ks} below, which reminds of the by now classical formula for the Euclidean fractional Laplacian in terms of a singular kernel, see e.g. \cite{CS}. The analogue of \eqref{FFE} in the whole Euclidean space has been the object of intensive research, see e.g. the foundational papers \cite{DQRV1, DQRV2}, and of the later ones \cite{BV3,GMP1, VDQR, Vjems, Vcetr, VV}. Later on, the investigation was extended to the case of different versions of fractional fast diffusions on Euclidean domains, having different probabilistic interpretations and different analytic properties, see e.g. \cite{BF, BII, BSV}. A common feature of some of such works is the use of \it Green function methods\rm, i.e., informally speaking, the idea of studying the \it potential \rm $(-\Delta)^{-s}u(t)$ of a solution $u(t)$, a strategy which we will use here as well.

The study of such equations in the setting of Riemannian manifolds has started very recently, and presently involves only the porous medium case, generalizing several previous results available in the nonfractional ($s=1$) case, see e.g. \cite{BGV, GM, GMP2, GMP, GMV, GMV-MA}. This is a nontrivial topic especially since the operator $(-\Delta)^s$ is in no sense explicit on a Riemannian manifold, and only indirect methods can be used. In fact, the papers \cite{BBGG,BBGM} deal with \eqref{FFE} in the case $m>1$, the first one in the special case of the hyperbolic space, and the second one in much more general setting, namely in the case of manifolds satisfying a Ricci lower bound and a Euclidean-type Faber-Krahn inequality (or, equivalently, a Euclidean-type Nash inequality, or if $N\ge3$ a Euclidean-type Sobolev inequality). The methods used in \cite{BBGG,BBGM} use the $s$-nonparabolicity of the manifold considered, namely the fact that $(-\Delta)^{-s}$ is well-defined through an integral kernel, at least on a suitable set of functions, a fact which is true under the assumptions stated there. Existence of solutions with data which belong to a space which is \it strictly larger \rm than $L^1$, and is naturally associated to the integral kernel defining $(-\Delta)^{-s}$, is then proved, as well as smoothing estimates for the ensuing solutions, i.e. bounds for the $L^\infty$ norm of the solution $u(t)$ in terms of an appropriate norm of the initial datum, which takes into account the fact that initial data can be ``large at infinity'', since their  global integrability is not assumed. It is also worth noticing that the set of data considered is strictly larger than what was known before even when $M=\mathbb R^n$.

Recently, a different geometric situation has been investigated for problem \eqref{FFE} though still in the case $m>1$. In fact, \cite{GMoP} investigates the case of manifolds with nonnegative Ricci curvature, in which in particular the Faber-Krahn, or Nash, or Sobolev inequalities need not hold at least in their Euclidean form, so that the methods of \cite{BBGG,BBGM} fail. Nonetheless, results qualitatively similar to the ones of those latter papers are proved as well, provided $M$ is nonparabolic (i.e. it admits a minimal, positive Green function) and certain uniformity conditions on the volume of Riemannian balls w.r.t. their center hold. It is remarkable that in the resulting smoothing estimates, some quantities related to volume growth of Riemannian balls play an explicit role, similar to the heat kernel bounds proved by Li and Yau \cite{LY} in the same setting. 

The present contribution then aims at starting the analysis of \eqref{FFE} in the fast diffusion case, a setting that seems not to have been considered so far. In fact, existence of solutions is standard, as the results of \cite{BBGM} essentially apply without significant changes. Nonetheless, the proof of smoothing effects needs different methods and tools, which will be provided here. A new phenomenon, w.r.t. the case $m>1$, appears, namely \it extinction in finite time \rm of solution, since diffusion is so fast that, informally speaking, mass is lost at infinity and disappears completely at a suitable time $T>0$. While such phenomenon is well-known and deeply studied in the Euclidean setting, we prove it here in the present much more general setting, proving also that the equation considered gives rise to \it infinite speed of propagation\rm, i.e. solutions corresponding to compactly supported data have instead full support for any $t>0$.

The paper is organised as follows: in Subsections \ref{11}, \ref{12} and \ref{13} below we list our notations, geometric assumptions and related consequences. Section \ref{PMR} is devoted to the statements of our main results. More precisely, in Section \ref{def-wds} we specify the notion of Weak Dual Solutions (WDS) and we discuss their existence and uniqueness; in Section \ref{smooth_sect} we state our smoothing estimates (in $L^p$ and weighted $L^p$ spaces) and we provide sufficient conditions so that solutions extinguish in finite time giving suitable extinction rates; in Section \ref{lower} lower bounds yielding infinite speed of propagation are given. Section \ref{technical} contains a series of crucial estimates and inequalities needed in the proofs of the main results which are instead given in Sections \ref{proof1}, \ref{proof2}, \ref{proof3} and \ref{proof4}. Finally, in the Appendix we provide a short and self-contained proof of the validity of a fractional Euclidean-Type Nash inequality relevant for the present paper, see Proposition \ref{fracnash} below.


\subsection{Geometric assumptions and functional setting}\label{11}


 The manifold $M$ will be required to satisfy certain geometric/analytic properties. The assumptions we make are similar to the ones considered in \cite{BBGM}. In particular, our first and main assumption, which will be required throughout the paper, will be the following one.
\begin{assum}\label{general}
	$M$ is an $N$-dimensional ($N\geq 2$) complete, connected, noncompact Riemannian manifold such that its Ricci curvature is bounded below:
 \begin{equation}\label{ric}
  {\rm Ric}\geq -(N-1)k \quad  \text{ for some } k>0\,.
 \end{equation}
Besides, we require that the following Faber-Krahn inequality holds:
\begin{equation}\label{FK}
 \lambda_1(\Omega)\ge c \, \mu_M(\Omega)^{-\frac2N}
\end{equation}
for a suitable $c>0$, where $\Omega$ is an arbitrary open, relatively compact subset of $M$, $ \mu_M(\Omega)$ denote its measure and $\lambda_1(\Omega)$ is the first eigenvalue of the Laplace-Beltrami operator $-\Delta_M$ with homogeneous Dirichlet boundary conditions on $\partial\Omega$.
\end{assum}

In what follows, for usual $ L^p(\hn) $  spaces ($1\leq p \leq +\infty$), the corresponding norm will typically be written as $ \| \cdot \|_{L^p(\hn)} $, except in some cases where for readability purposes we will adopt the more compact notation $ \| \cdot \|_{p} $. We point out that \eqref{FK} is equivalent to the \emph{Nash inequality}
\begin{equation}\label{Nash}
\left\| f \right\|_2^{1+\frac2N} \le C \left\| f \right\|_1^{\frac2N}  \left\| \nabla f \right\|_2
\end{equation}
 and, when $N\ge3$, to the \emph{Sobolev inequality}
\begin{equation}\label{sob}
\left\| f \right\|_{\frac{2N}{N-2}} \le C \left\| \nabla f \right\|_2 ,
\end{equation}
for all smooth and compactly supported $f$ (see e.g.~\cite[Chapter 8]{H} and \cite{C}).
It is known that the validity of the Euclidean-type Nash inequality \eqref{Nash} implies the validity of its fractional analogue, see \cite{BM} for a precise and general version of this result and Proposition \ref{fracnash} below for the statement relevant for the present paper.

In some of our results, we will need the following stricter assumptions on curvatures.

\begin{assum}\label{ch}
$M$ is an $N$-dimensional Cartan-Hadamard manifold, namely $M$ is complete, simply connected and has everywhere nonpositive sectional curvature.
\end{assum}
Note that if $ M $ is a Cartan-Hadamard manifold, then \eqref{FK} is always true (see again \cite[Chapter 8]{H}), whereas \eqref{ric} should still be required separately. Sometimes, we shall also require a stricter inequality, namely:
\begin{assum}\label{neg}
We require that $M$ is an $ N $-dimensional Cartan-Hadamard manifold and, besides, that
\begin{equation*}
  {\rm sec}(M)\le  -\c \qquad \text{for a given } \c >0\, .
 \end{equation*}
 \end{assum}
 Clearly, the main example we have in mind here is the case of the \it hyperbolic space \rm $\mathbb H^n$. In general, if Assumption \ref{neg} holds, an $L^2$-Poincar\'e inequality holds, namely
 \begin{equation}\label{poi}
\left\| f \right\|_{2} \le C \left\| \nabla f \right\|_2 ,
\end{equation}
for all smooth and compactly supported $f$, implying in particular that  the $L^2$-spectrum of $\Delta$ is bounded away from zero, see e.g. \cite{M}.

\subsection{Fractional Laplacian, fractional potentials and related functional inequalities}\label{12}

%

We now briefly highlight some consequences of the above assumptions in terms of the \emph{fractional Laplacian}, namely the operator $ (-\Delta_M)^s $ defined as the spectral $s$-th power of the  Laplace-Beltrami operator $ -\Delta_M $.  Note that by the spectral theorem it is also given by the explicit formula
$$
 (-\Delta_M)^s v(x) = \int_0^{+\infty} \left(\int_M k_{\hn}(t,x,y)\left(v(y)-v(x)\right)
 \, \dg(y)\right)\, \frac{\rm{d}t}{t^{1+s}}
$$
for a suitable set of functions $v$, see \cite{J}, where $k_{\hn}(t,x,y)$ denotes the heat kernel of $M$. On the other hand, by exploiting \cite[Lemma 2.11]{Caselli2}, it has been proved in \cite[Proposition 6.3]{Caselli} that, in the framework of stochastically complete Riemannian manifolds, the order of integration in the above formula may be changed, therefore we have the following crucial result:

\begin{prop}\label{caselli} (see \cite[Proposition 6.3]{Caselli}) Let $M$ be a complete, stochastically complete Riemannian manifold. For all $v\in C_c^\infty(M)$, for all $x\in M$, one has:
\begin{equation}\label{Ks}
 (-\Delta_{\hn})^s v(x):={\rm P.V.}\int_{\hn}[v(x)-v(y)]\,\K_s(x,y)\dg(y)
\end{equation}
with \[\K_s(x,y):=  c_s\int_0^{+\infty}\frac{k_{\hn}(t,x,y)}{t^{1+s}}\,{\rm{d}t},\]
where $c_s=1/\Gamma(-s)$. If $s<1/2$ the integral is absolutely convergent, hence the principle value is not required.
\end{prop}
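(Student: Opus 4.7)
The plan is to interchange the order of integration in the subordination representation $(-\Delta_{\hn})^s v(x)=c_s\int_0^\infty\!\int_{\hn}k_{\hn}(t,x,y)(v(y)-v(x))\,\dg(y)\,t^{-1-s}\,{\rm d}t$ coming from the heat semigroup, where stochastic completeness has been used to rewrite $P_tv(x)-v(x)=\int_{\hn}k_{\hn}(t,x,y)(v(y)-v(x))\,\dg(y)$. The main obstruction is that, for $s\ge 1/2$, the joint integrand fails to be absolutely integrable in any neighbourhood of the diagonal $\{y=x\}$, so naive Fubini is unavailable and the rearrangement must be interpreted in principal value form.

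I would fix $\varepsilon>0$ and split $\hn=B_\varepsilon(x)\cup(\hn\setminus B_\varepsilon(x))$. On the exterior, Fubini applies directly: the Li--Yau-type Gaussian upper bound $k_{\hn}(t,x,y)\lesssim t^{-N/2}e^{-c\,d(x,y)^2/t}$ for small $t$ (available since ${\rm Ric}\ge -(N-1)k$), together with the appropriate long-time control on $k_{\hn}$ and the compact support of $v$, makes $(t,y)\mapsto k_{\hn}(t,x,y)|v(y)-v(x)|t^{-1-s}$ jointly integrable on $(0,\infty)\times(\hn\setminus B_\varepsilon(x))$, and exchanging produces $\int_{\hn\setminus B_\varepsilon(x)}[v(x)-v(y)]\,\K_s(x,y)\,\dg(y)$. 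On $B_\varepsilon(x)$, normal coordinates at $x$ yield $v(y)-v(x)=\langle\nabla v(x),\exp_x^{-1}(y)\rangle+R(x,y)$ with $|R(x,y)|\le C\,d(x,y)^2$; the quadratic remainder, paired with the short-time bound $\K_s(x,y)\lesssim d(x,y)^{-(N+2s)}$, contributes an $O(\varepsilon^{2-2s})$ term that vanishes as $\varepsilon\to 0$, while the linear term carries the singular cancellation and tends to $0$ in the principal value sense as $\varepsilon\to 0$, thanks to the symmetry $k_{\hn}(t,x,y)=k_{\hn}(t,y,x)$ combined with the approximate radial symmetry of $k_{\hn}(t,x,\cdot)$ about $x$ for short times---which is the content of the abstract commutation lemma \cite[Lemma 2.11]{Caselli2}.

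For the final assertion at $s<1/2$, the Lipschitz bound $|v(x)-v(y)|\le C\,d(x,y)$ and the near-diagonal bound $\K_s(x,y)\lesssim d(x,y)^{-(N+2s)}$ yield an integrand $\lesssim d(x,y)^{1-(N+2s)}$; against the $d(x,y)^{N-1}$ volume element in geodesic polar coordinates this integrates to $\int_0^\varepsilon r^{-2s}\,{\rm d}r<\infty$ precisely when $s<1/2$, so the truncation is unnecessary and the principal value sign can be dropped. The hard part is the cancellation of the linear Taylor term on a curved manifold, where---unlike in $\mathbb R^N$---the heat kernel is not exactly radially symmetric about any point and normal coordinates distort the Riemannian volume element; this is the step for which the cited Caselli lemmas are indispensable, and the reason the proof is deferred to \cite[Proposition 6.3]{Caselli}.
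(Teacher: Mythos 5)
The paper does not prove this proposition: it is quoted verbatim from \cite[Proposition 6.3]{Caselli}, and the surrounding text merely notes that the proof there exploits \cite[Lemma 2.11]{Caselli2} to justify interchanging the order of integration on stochastically complete manifolds. There is thus no in-paper argument to compare against, and you rightly flag at the end that the substance is delegated to those references.

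As a reconstruction, your outline has the right shape: start from the Balakrishnan/subordination representation, use stochastic completeness to write $P_t v(x)-v(x)=\int_M k_{\hn}(t,x,y)\bigl(v(y)-v(x)\bigr)\,\dg(y)$, split into $B_\varepsilon(x)$ and its complement, apply Fubini on the exterior, and treat the interior by a second-order Taylor expansion plus a principal-value cancellation of the linear term. Your $s<1/2$ assertion is also handled correctly: $r^{1-(N+2s)}\cdot r^{N-1}=r^{-2s}$ is integrable near $0$ exactly when $2s<1$. Two caveats remain. First, a scope mismatch: the proposition is stated for an arbitrary complete, stochastically complete manifold, yet you invoke ${\rm Ric}\ge-(N-1)k$ and Li--Yau bounds to control $k_{\hn}$ and $\K_s$ near the diagonal. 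Those curvature hypotheses belong to Assumption~\ref{general} of the paper, not to this proposition, so at the stated level of generality one must instead rely on curvature-free short-time heat-kernel asymptotics (e.g., parametrix estimates valid on any complete manifold) for the near-diagonal control. Second, the crux --- that on a curved manifold $\K_s(x,\cdot)$ is not exactly radial about $x$, so the odd-symmetry cancellation of the linear Taylor term is only approximate --- is correctly identified but not established; your account of what \cite[Lemma 2.11]{Caselli2} supplies is a plausible guess, not an argument, and since the interchange-of-limits step is where the entire content of the proposition lies, your sketch does not close it (which is acceptable here precisely because the paper itself leaves it as a citation).
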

It is important to notice that the assumption on the stochastic completeness of $M$ is very mild. For example, it is satisfied when
\[
\textrm{Ric}(x)\ge -Cr(x,x_0)^2
\]
 for suitable $C>0$, $x_0\in M$, and for all $x$ s.t. $r(x,x_0)$ is sufficiently large, see \cite[Theorem 15.4(a)]{Gri}. Therefore, \eqref{Ks} holds in particular under our (much stronger) assumption ${\rm Ric}\geq -(N-1)k.$ This will be fundamental later on and will be used without further comment.

\begin{rem}
{\it
If $M=\mathbb{H}^{N}$ and $N\geq 2$, the validity of \eqref{Ks} was already pointed out in \cite[Theorems 2.4 and 2.5]{BGS} together with the asymptotic behaviors: 
$$\K_s(x,y) \sim r(x,y)^{N-2s} \text{ as } r\rightarrow 0^+\,,  \quad \K_s(x,y)\sim r(x,y)^{-1-s}e^{-(N-1)r(x,y)} \text{ as } r\rightarrow +\infty\,.$$
}
\end{rem}

Furthermore, it is also  well-known that \eqref{FK} implies, for all $x,y\in M$ and $t>0$, and for suitable $C_1, C_2>0$, the following \emph{Gaussian upper bound} on the \emph{heat kernel} $k_{\hn}(t,x,y)$ of $M$:
\begin{equation}\label{gaussian}
k_{\hn}(t,x,y)\le \frac {C_1}{t^\frac N2} \, e^{-C_2\frac{r(x,y)^2}{t}} \, .
\end{equation}
This follows e.g. from \cite[Corollary 15.17]{G} and the subsequent formula (15.49) there.

In particular, thanks to the bound \eqref{gaussian}, we have that $M$ is \emph{$s$-nonparabolic}, in the sense that the integral
\begin{equation}\label{fract}
\GM(x,y):=\int_0^{+\infty}\frac{k_{\hn}(t,x,y)}{t^{1-s}}\,{\rm{d}t}
\end{equation}
is finite for all $x,y\in M$ with $x\not=y$. The function $\GM$ defined above is the \it fractional Green function \rm on $M$, in the sense that $ (-\Delta_M)^s \, \GM(\cdot,y)  = \delta_y $ for every $ y \in M $, where $ \delta_y $ stands for the Dirac delta centered at $y$. Furthermore, from \eqref{gaussian} and \eqref{fract}, one has the Euclidean-type bound
\begin{equation}\label{green-euc}
\GM(x,y)\le \frac C{r(x,y)^{N-2s}} \qquad \forall x,y\in M \, ,
\end{equation}
for some $ C=C(N,c,s)>0 $. Furthermore, in view of the continuity of the map $ x \mapsto k_{\hn}(t,x,y) $ for every fixed $ (t,y) \in \mathbb{R}^+ \times M $, and by virtue of estimate \eqref{gaussian}, also the map $ x \mapsto \GM(x,y) $ turns out to be continuous in $ \hn \setminus \{ y \} $ for every fixed $ y \in M $.

Once the fractional Green function has been introduced we can define, for any sufficiently regular function $ \psi $, its \emph{fractional potential}:
\begin{equation*}
(-\Delta_M)^{-s} \psi (x) : = \int_{\hn} \psi(y) \, \GM(x,y) \, \dg(y) =  \int_0^{+\infty} \left(\int_M\frac{k_{\hn}(t,x,y)}{t^{1-s}}\psi(y)\, \dg(y)\right)\, {\rm{d}t}
\end{equation*}
The spectral theorem ensures that the above operator is indeed the real inverse operator of $ (-\Delta_M)^s $, at least on appropriate subspaces of functions, see e.g. \cite{BBGM} for more details.

\medskip
It will be essential in the sequel to have at our disposal the following functional inequality.
\begin{prop}\label{fracnash}
Suppose that $M$ is a $N$-dimensional manifold supporting inequality \eqref{Nash}.
 Then $M$ supports, for all $s\in(0,1)$, the following fractional Euclidean-type Nash inequality:
\begin{equation}\label{Nash-frac}
\|f\|_2^{1+\frac{2s}N}\le C\|(-\Delta)^\frac s2f\|_2\|f\|_1^{\frac{2s}N}\qquad \forall f\in C_c^{\infty}(M)\,.
\end{equation}

\end{prop}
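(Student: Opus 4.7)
My plan is to transfer the classical Euclidean-type Nash inequality \eqref{Nash} to its fractional counterpart \eqref{Nash-frac} by the standard Varopoulos-type route: go first from \eqref{Nash} to ultracontractivity of the heat semigroup, then pass to the fractional semigroup via Bochner subordination, and finally recover \eqref{Nash-frac} by a fractional version of Nash's original trick. Under Assumption \ref{general} the Gaussian upper bound \eqref{gaussian} holds, which immediately yields the classical ultracontractive estimate
$$
\|e^{t\Delta_{\hn}} f\|_\infty \le C\, t^{-N/2}\|f\|_1 \qquad \forall t>0,\ f\in L^1(M).
$$

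Since $(-\Delta_{\hn})^s$ is by definition the spectral $s$-th power of $-\Delta_{\hn}$, Bochner's subordination formula gives
$$
e^{-t(-\Delta_{\hn})^s}=\int_0^{+\infty} e^{\sigma\Delta_{\hn}}\,\eta_{s,t}(\sigma)\,{\rm d}\sigma,
$$
where $\eta_{s,t}$ is the density of the one-sided $s$-stable subordinator, a positive function of unit mass satisfying the scaling $\eta_{s,t}(\sigma)=t^{-1/s}\eta_{s,1}(\sigma\,t^{-1/s})$. Inserting the classical ultracontractive bound into this identity and using the scaling of $\eta_{s,t}$ produces the fractional ultracontractivity
$$
\|e^{-t(-\Delta_{\hn})^s}f\|_\infty \le C'\,t^{-\frac{N}{2s}}\|f\|_1 \qquad \forall t>0,\ f\in L^1(M).
$$

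Given this, \eqref{Nash-frac} follows from the fractional version of Nash's trick. For $f\in C_c^\infty(M)$, set $u(t):=e^{-t(-\Delta_{\hn})^s}f$ and $\phi(t):=\|u(t)\|_2^2$. By the spectral theorem $\phi'(t)=-2\|(-\Delta_{\hn})^{s/2}u(t)\|_2^2$ and the latter quantity is nonincreasing in $t$, hence
$$
\phi(T)\ \ge\ \|f\|_2^2-2T\,\|(-\Delta_{\hn})^{s/2}f\|_2^2 .
$$
On the other hand, the $L^1$-contractivity of $e^{-t(-\Delta_{\hn})^s}$ together with the fractional ultracontractivity gives
$$
\phi(T)\ \le\ \|u(T)\|_1\,\|u(T)\|_\infty\ \le\ C'\,T^{-N/(2s)}\,\|f\|_1^2 .
$$
Combining the two estimates and minimizing the right-hand side over $T>0$ (the optimum scales as $T\sim (\|f\|_1/\|(-\Delta_{\hn})^{s/2}f\|_2)^{2/(1+N/(2s))}$) produces
$$
\|f\|_2^{2+\frac{4s}{N}}\ \le\ C\,\|(-\Delta_{\hn})^{s/2}f\|_2^2\,\|f\|_1^{\frac{4s}{N}},
$$
which after taking square roots is exactly \eqref{Nash-frac}.

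The only genuinely delicate step is the subordination one, namely the validity of the integral representation of $e^{-t(-\Delta_{\hn})^s}$ and the scaling of $\eta_{s,t}$ in the manifold setting. This is however a purely spectral matter and requires no geometric hypothesis beyond those already needed for \eqref{gaussian}; once it is in place, the $L^1$-contraction of the fractional heat flow, the energy identity, and the elementary one-parameter optimization follow at once from spectral calculus, making the whole argument short and self-contained.
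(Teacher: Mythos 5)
Your proof is correct and follows the same overall strategy as the paper's: transfer ultracontractivity of $e^{t\Delta}$ to $e^{-t(-\Delta)^s}$ via Bochner subordination, and then convert the fractional ultracontractive bound back into a Nash-type inequality. Two details differ. First, to bound the subordination integral $\int_0^\infty v^{-N/2}\,\eta_{s,t}(v)\,\mathrm{d}v$ by $C\,t^{-N/(2s)}$, you invoke the scaling law $\eta_{s,t}(\sigma)=t^{-1/s}\eta_{s,1}(\sigma t^{-1/s})$ of the stable subordinator density, whereas the paper avoids citing that scaling law and instead computes the moment $\int v^{-\beta}g_t^{(s)}(v)\,\mathrm{d}v$ explicitly from the Laplace-transform characterization $\int e^{-vx}g_t^{(s)}(v)\,\mathrm{d}v=e^{-tx^s}$ together with Fubini and an elementary substitution; the two calculations are equivalent, but the paper's is more self-contained. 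Second, for the last step you reprove the implication ``ultracontractivity $\Rightarrow$ Nash'' from scratch by Nash's original energy-decay-plus-optimization argument, while the paper simply cites the equivalence of \cite[Theorem 2.4.2]{DA} in both directions; your version is more elementary at the cost of a few extra lines. One small presentational caveat: the proposition assumes only that $M$ supports \eqref{Nash}, not the full Assumption \ref{general}, so your appeal to the Gaussian bound \eqref{gaussian} as the source of $\|e^{t\Delta}\|_{1,\infty}\le Ct^{-N/2}$ is a detour — the paper obtains this ultracontractive bound directly from \eqref{Nash} via \cite[Theorem 2.4.2]{DA}, which keeps the hypotheses minimal (this is harmless for your argument since \eqref{Nash} and \eqref{FK} are equivalent and \eqref{FK} does imply \eqref{gaussian}, but the shorter route is preferable).
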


The proof of Proposition \ref{fracnash} can be found, in a more general form and context, in \cite{BM}. We provide the reader, however, with a short and self-contained proof in the Appendix, since we feel that in the present situation our proof is much simpler and easier to read.

It is well-known that a Nash-type inequality holds, when $N\ge3$, if and only if a Sobolev-type inequality holds, see \cite[Theorems 2.4.2, 2.4.6]{DA}, where such equivalence is proved through equivalent ultracontractive estimates for the semigroup associated to the generator of the quadratic form involved. Hence, Proposition \ref{fracnash} implies the validity of the following Corollary. 

\begin{cor}
Assume that $N\ge3$ and that $M$ supports the Sobolev inequality \eqref{sob}.
Then it supports, for all $s\in (0,1)$, the fractional Sobolev inequality
\begin{equation}\label{Sobolev-frac}
\|f\|_{\frac{2N}{N-2s}}\le C \|(-\Delta)^\frac s2f\|_2\qquad \forall f\in C_c^{\infty}(M).
\end{equation}
\end{cor}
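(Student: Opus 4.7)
The plan is to chain together Proposition \ref{fracnash} with the Varopoulos--Carlen--Kusuoka--Stroock type equivalence (as formulated in \cite[Theorems 2.4.2, 2.4.6]{DA}) between Nash inequalities, ultracontractive bounds, and Sobolev inequalities, applied at the level of the quadratic form
\[
Q_s(f) := \|(-\Delta_M)^{s/2} f\|_2^2 , \qquad f\in C_c^\infty(M),
\]
whose associated generator is $(-\Delta_M)^s$ and whose associated semigroup is the subordinated heat semigroup $e^{-t(-\Delta_M)^s}$.

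First I would observe that the Euclidean Sobolev inequality \eqref{sob} assumed here is, by the cited results of \cite{DA}, equivalent to the Euclidean Nash inequality \eqref{Nash}, so the hypotheses of Proposition \ref{fracnash} are satisfied. Applying that proposition yields the fractional Nash inequality \eqref{Nash-frac}, which can be rewritten in the equivalent form
\[
\|f\|_2^{2+\frac{4s}{N}} \le C^2 \, Q_s(f) \, \|f\|_1^{\frac{4s}{N}} \qquad \forall f\in C_c^\infty(M).
\]
This is precisely a Nash-type inequality of dimension $d = N/s$ attached to the quadratic form $Q_s$ in the sense of \cite[Thm.~2.4.6]{DA}.

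Next I would invoke the same equivalence in reverse, now at exponent $s$: the Nash inequality for $Q_s$ displayed above is equivalent to the ultracontractive bound
\[
\| e^{-t(-\Delta_M)^s} \|_{L^1(M)\to L^\infty(M)} \le C_1 \, t^{-\frac{N}{2s}} \qquad \forall t>0,
\]
by \cite[Thm.~2.4.6]{DA}. Since $N\ge 3 > 2s$, this ultracontractive bound is in turn equivalent, again by \cite[Thm.~2.4.2]{DA}, to the Sobolev-type inequality associated with the form $Q_s$, that is
\[
\|f\|_{\frac{2N}{N-2s}} \le C \, Q_s(f)^{1/2} = C \, \|(-\Delta_M)^{s/2} f\|_2 \qquad \forall f\in C_c^\infty(M),
\]
which is exactly \eqref{Sobolev-frac}.

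There is no genuine obstacle here: the only point requiring mild care is that the equivalences in \cite{DA} are stated in an abstract Dirichlet-form framework, so one must verify that $Q_s$ fits that framework (which follows from the spectral theorem, since $(-\Delta_M)^s$ is a nonnegative self-adjoint operator on $L^2(M)$ generating a Markovian semigroup by subordination) and that the dimensional parameter $N/s$ to be used in the equivalence at the fractional level is the one coming out of \eqref{Nash-frac}. Once these identifications are made, the result is a direct two-line application of Proposition \ref{fracnash} followed by the Nash $\Leftrightarrow$ ultracontractivity $\Leftrightarrow$ Sobolev chain.
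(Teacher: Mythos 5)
Your proof is correct and follows essentially the same route as the paper: starting from the Sobolev inequality \eqref{sob} (equivalent to the Nash inequality \eqref{Nash} when $N\ge3$), applying Proposition \ref{fracnash} to obtain \eqref{Nash-frac}, and then invoking the equivalence Nash $\Leftrightarrow$ ultracontractivity $\Leftrightarrow$ Sobolev from \cite[Theorems 2.4.2, 2.4.6]{DA} applied to the Dirichlet form of $(-\Delta_M)^s$ with effective dimension $N/s$. The only difference is that you spell out the intermediate ultracontractive bound and the dimensional bookkeeping, which the paper leaves implicit.
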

Notice that by density arguments the above inequalities hold for every $f\in H^s(M)$ see e.g. \cite{H}.

%

\subsection{Notation}\label{13} Since we will deal with several multiplying constants, whose exact value is immaterial to our purposes, we will use as much as possible the general symbol $C$. The actual value may therefore change from line to line, without explicit reference. However, when it is significant to specify the dependence of $C$ on suitable parameters, we will write it explicitly while, in some cases, in order to avoid ambiguity, we will use other symbols.

 \section{Main results}\label{PMR}

We initially provide the precise notion of solution to \eqref{FFE} we will work with, and we state our related result regarding existence.

\subsection{Definition of Weak Dual Solutions (WDS), and their existence}\label{def-wds}
We will deal with suitable solutions to \eqref{FFE} starting from initial data that belong to the classical $L^p(\hn)$ space ($1\leq p < +\infty$) or to the following \emph{weighted space}, defined in terms of the fractional Green function:
\begin{equation*}\label{LG}
L^p_{\GM}(\hn) := \left \{ u : \hn \rightarrow \R \text{ measurable} : \ \sup_{x_0\in \hn}   \left\| u \right\|_{L^p_{x_0,\GM}} < +\infty  \right\} ,
\end{equation*}
where, for every fixed $ x_0 \in \hn $, we put
\begin{equation}\label{normLGx0}
\left\| u \right\|_{L^p_{x_0,\GM}}^p  := \int_{B_1(x_0)} \left| u (x) \right|^p \dg(x) + \int_{\hn \setminus B_1(x_0)} \left| u(x) \right|^p \GM(x ,x_0) \, \dg(x) \, .
\end{equation}
The space $ L^p_{x_0,\GM}(\hn) $ is in turn defined as the set of all measurable functions for which the norm in \eqref{normLGx0} is finite.  It is natural to endow $L^p_{\GM}(\hn)$ with the norm
\begin{equation*}
\left\| u \right\|_{L^p_{\GH}}  := \sup_{x_0\in \hn}  \left\| u \right\|_{L^p_{x_0,\GH}} .
\end{equation*}

Thanks to \eqref{green-euc}, $ \GH(x,x_0) \leq C$ for all $x \in \hnn\setminus B_1(x_0)$ and all $x_0\in \hnn$, so that the inclusion $L^p(\hnn) \subseteq L^p_{\GH}(\hnn)$ holds. Moreover, the inclusion $ L^p_{\GM}(\hnn) \subseteq L^p_{x_0,\GH}(\hnn) $ trivially holds by definition. In \cite[Section 4]{BBGM} it is shown that $L^1(\hnn) \subsetneq L^1_{\GM}(\hnn) \subsetneq L^1_{x_0,\GM}(\hnn) $ for all $x_0\in \hnn$, namely the inclusions are \emph{strict}. In addition, admissible decay rates were determined for functions to belong to $L^1_{\GM}(\hnn)$, therefore giving a more noticeable feeling  about how larger these spaces can be compared to $ L^1(M) $. Clearly, these examples can be adapted to the case $1<p< \infty$. To make more explicit the above considerations, we recall that by \cite[Prop. 4.1]{BBGM} we have that $u_0\in L^1_{G^s_M}(M)$ if:

\begin{itemize}
\item $M=\rn$ and $\left| u_0(x) \right| \leq \dfrac{C}{|x|^a}$ for all $|x|\geq R$, for some $C, R>0$ and $a>2s$;
\item $M=\mathbb{H}^{N}$ and $\left|u_0(x)\right|\leq  \dfrac{C}{(r(x,o))^a}$ for all $r(x,o) \geq R$, for some $o\in M$, $C, R>0$ and $a>s$.
\end{itemize}

Clearly in both cases, initial data are allowed to decay \emph{slower} than functions in $L^1(M)$; in fact the allowed powers are \it independent of $N$ in $\mathbb R^N$\rm, while functions in $L^1(\mathbb{H}^{N})$ are expected to decay faster than $e^{-r(x,o)(N-1)}$, whereas power-like decay is instead allowed in the larger space considered.

\it Formally\rm, we can reformulate problem \eqref{FFE} in an equivalent dual form, by means of the inverse operator $(- \Delta_{\hnn})^{-s}$, whose kernel is given by the Green function of $(- \Delta_{\hnn})^{s}$:
\begin{align*}
\left\{\begin{array}{lll}
			 \partial_t \! \left[ (- \Delta_{\hnn})^{-s} u \right] + u^m = 0 & \qquad\mbox{on }(0,+\infty)\times\hnn\,,\\
			u(0,\cdot)=u_0  & \qquad \mbox{in } \hnn\,.
		\end{array}\right.
\end{align*}
Notice that, again formally, this entails that the \it potential \rm of $u(t)$ is \it nonincreasing in time  \rm provided the solution and the Green function are nonnegative.

Next we define a concept of weak solutions suitable for the above formulation, firstly introduced in \cite{BV2,BV1} in the context of bounded Euclidean domains, and generalized to the case of Riemannian manifolds in \cite{BBGG,BBGM}.

\begin{defi}\label{defi_WDS}
 Let $ u_0 \in L^1_{\GM}(\hnn) $, with $ u_0 \ge 0 $. We say that a nonnegative measurable function $ u  $ is a Weak Dual Solution (WDS) to problem~\eqref{FFE} if, for every $T>0$:
\begin{itemize}
	
\item $u \in C^0([0, T]; L^1_{x_0,\GM}(\hnn) )$ for all $ x_0 \in \hnn $;

\smallskip

\item  $u^m \in L^1( (0, T) ; L^{1}_{loc}(\hnn) ) $;

\smallskip

\item $u$ satisfies the identity
\begin{equation}\label{def_eq}
\int_{0}^{T} \int_{\hnn}  \partial_t \psi \,  (- \Delta_{\hnn})^{-s} u \, \dg \, {\rm d}t - \int_{0}^{T} \int_{\hnn} u^m \, \psi \, \dg\, {\rm d}t = 0
\end{equation}
for every test function $\psi \in C^1_c((0,T); L_c^{\infty}(\hn))$;

\smallskip

\item $u(0,\cdot)=u_0$ a.e.~in $\hn$.

\end{itemize}

\end{defi}

Equation \eqref{def_eq} is well defined, see e.g., \cite[Remark 2.1]{BBGM} for more details.
It is possible to construct a (minimal) WDS for any nonnegative initial datum $ u_0\in  L^1_{\GH}(\hnn)$ as a monotone limit of nonnegative semigroup (mild) solutions.

\begin{thm}[Existence of a WDS for data in $L^1_{\GH}$ ]\label{thm-existence}
	Let $ \hn $ satisfy Assumption \ref{general} and let $u_0 $ be any nonnegative initial datum such that $u_0\in  L^1_{\GH}(\hnn)$. Then there exists a weak dual solution to problem~\eqref{FFE}, in the sense of Definition \ref{defi_WDS}.
\end{thm}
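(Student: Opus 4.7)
The plan is to follow the by-now classical construction of weak dual solutions as monotone limits of mild (nonlinear semigroup) solutions, adapting the strategy used in \cite{BBGM} for the fractional porous medium case to the fast diffusion exponent $0<m<1$. The steps mirror the outline of \cite{BV1,BBGG,BBGM}, with modifications that account for the different nonlinearity (which actually simplifies certain bounds, since $u\mapsto u^m$ is sublinear).

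First, I would approximate the datum from below. Given $u_0\in L^1_{\GH}(M)$ with $u_0\ge 0$, fix an exhausting sequence of open relatively compact sets $\Omega_n\subset M$ with $\overline{\Omega_n}\subset\Omega_{n+1}$ and $\bigcup_n\Omega_n=M$, and set $u_{0,n}:=\min\{u_0,n\}\chi_{\Omega_n}\in L^1(M)\cap L^\infty(M)$. Then $u_{0,n}\uparrow u_0$ pointwise a.e. For each such bounded, compactly supported, integrable datum, the existence of a nonnegative mild solution $u_n$ in the sense of Crandall--Liggett applied to the $m$-accretive operator $(-\Delta_M)^s(\,\cdot\,)^m$ on $L^1(M)$ is standard (see the references in \cite{BBGM}, which work identically for $0<m<1$ since the operator is still $m$-accretive in $L^1$). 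These semigroup solutions enjoy the $L^1$-contraction property, the $T$-contraction (order preservation), conservation/decay of mass, and an $L^\infty$-bound $\|u_n(t)\|_\infty\le\|u_{0,n}\|_\infty$.

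Second, I would verify that each $u_n$ is itself a WDS for the datum $u_{0,n}$. Since $u_{0,n}\in L^1\cap L^\infty$ and $u_n(t)\in L^1\cap L^\infty$ with $u_n^m\in L^1_{\mathrm{loc}}$, the dual formulation \eqref{def_eq} follows from the strong form satisfied by mild solutions: one multiplies the equation by $(-\Delta_M)^{-s}\psi$ for $\psi\in C^1_c((0,T);L^\infty_c(M))$, uses self-adjointness of $(-\Delta_M)^{-s}$ on the appropriate subspaces, and integrates by parts in time. The potential $(-\Delta_M)^{-s}u_n(t,x)=\int_M\GM(x,y)\,u_n(t,y)\,\dg(y)$ is finite and continuous since $u_n$ is bounded and integrable and $\GM$ satisfies \eqref{green-euc} together with the Gaussian long-time decay coming from \eqref{gaussian}. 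Order preservation yields $u_n\le u_{n+1}$ a.e., so we can define the pointwise limit $u(t,x):=\lim_n u_n(t,x)$.

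Third, I would propagate the weighted norm. The key estimate is that for each fixed $x_0\in M$, the potential at $x_0$ is nonincreasing along the flow, which is exactly the dual reformulation: $\partial_t[(-\Delta_M)^{-s}u_n]=-u_n^m\le 0$. Integrating this against $\GM(x_0,\cdot)$ and using Fubini gives
\begin{equation*}
\int_M \GM(x,x_0)\, u_n(t,x)\,\dg(x) \le \int_M \GM(x,x_0)\, u_{0,n}(x)\,\dg(x)\le \|u_0\|_{L^1_{\GH}}\quad\text{for all }t\ge0,
\end{equation*}
uniformly in $n$ (here one also uses that the Green kernel is symmetric and positive). Combined with the splitting $B_1(x_0)\cup(M\setminus B_1(x_0))$ that enters \eqref{normLGx0}, and with the fact that $\GM\ge c>0$ on $B_1(x_0)\times\{x_0\}$ (using \eqref{fract} and the short-time behaviour of $k_M$), this produces a uniform bound on $\|u_n(t)\|_{L^1_{x_0,\GM}}$ for every $x_0$, yielding in particular $u(t)\in L^1_{\GH}(M)$ for every $t\ge0$.

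Finally, I would pass to the limit in Definition \ref{defi_WDS}. Monotone convergence gives $u_n(t,x)\uparrow u(t,x)$ and, using the above potential bound, $(-\Delta_M)^{-s}u_n(t,x)\uparrow (-\Delta_M)^{-s}u(t,x)$ for a.e. $x$ (with the limit finite a.e.); similarly $u_n^m\uparrow u^m$ pointwise. Since the test functions have compact time support and $\psi,(\partial_t\psi)\in L^\infty_c$, both integrals in \eqref{def_eq} pass to the limit by dominated/monotone convergence, giving the identity for $u$. The time continuity $u\in C^0([0,T];L^1_{x_0,\GM})$ is obtained, as in \cite{BBGM}, by combining the $L^1$-contraction for the approximants (which gives equicontinuity in $t$ of $t\mapsto u_n(t)$ in $L^1(\Omega_n)$), the uniform $L^1_{x_0,\GM}$-control above, and the monotonicity of the potential to control the tail. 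The initial condition is recovered from the monotone construction. The solution is minimal because the whole construction is from below.

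The main obstacle I expect is the propagation of the $L^1_{x_0,\GM}$ norm together with the time continuity in that weighted space: the dual formulation shows the potential is monotone nonincreasing in time, but translating this into genuine $L^1_{x_0,\GM}$-continuity requires carefully handling the behaviour near $x_0$ (where $\GM$ is singular like $r^{-(N-2s)}$) and at infinity (where $\GM$ decays), uniformly in $n$ and $x_0$. Once this is under control, the rest is monotone convergence and the dual-form identity passes to the limit essentially for free.
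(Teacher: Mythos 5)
Your overall strategy — truncate the datum, build mild solutions by Crandall--Liggett for $L^1\cap L^\infty$ data, pass to the monotone limit, and propagate a weighted norm bound — matches the paper's outline (following \cite{BBGM}). But your Step~3, the crucial weighted stability estimate, contains a genuine error. You correctly derive the potential monotonicity $\int_M \GM(x,x_0)\,u_n(t,x)\,\dg(x)\le\int_M \GM(x,x_0)\,u_{0,n}(x)\,\dg(x)$, but the subsequent claim that the right-hand side is $\le\|u_0\|_{L^1_{\GH}}$ is false. The potential of $u_0$ at $x_0$ integrates $u_0$ against the singular weight $\GM(x_0,y)\sim r(x_0,y)^{-(N-2s)}$ near $y=x_0$, whereas the norm $\|\cdot\|_{L^1_{x_0,\GM}}$ in \eqref{normLGx0} uses the bounded weight $1$ on $B_1(x_0)$. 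Taking $u_0(y)=r(x_0,y)^{-\alpha}\chi_{B_1(x_0)}(y)$ with $2s\le\alpha<N$ gives $u_0\in L^1(M)\subset L^1_{\GH}(M)$ yet $\int_M \GM(x_0,y)\,u_0(y)\,\dg(y)\gtrsim\int_0^1 r^{2s-\alpha-1}\,\rd r=+\infty$; since $u_{0,n}\uparrow u_0$, the truncated potentials increase without any uniform bound, so you obtain no control on $\|u_n(t)\|_{L^1_{x_0,\GM}}$ and the rest of the argument cannot close.

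The device the paper uses for exactly this step, and which you did not invoke, is the $L^p_{x_0,\GM}$-stability of Lemma~\ref{LpPhi stability}: rather than testing with the raw Green function, one tests the dual form with $(-\Delta_{\hn})^{-s}\psi$ for a fixed nonnegative $\psi\in L^\infty_c(\hn)$ supported in a small ball around $x_0$. Because $\psi$ is bounded and compactly supported, $(-\Delta_{\hn})^{-s}\psi$ is bounded near $x_0$, and by Lemma~\ref{lem.Phi.estimates} it is bounded below by $\underline C\|\psi\|_1\bigl(1\wedge r(x_0,\cdot)^{N-2s}\bigr)\GM(\cdot,x_0)$; the resulting weight is comparable from above and below to the weight defining $\|\cdot\|_{L^1_{x_0,\GM}}$, with constants independent of $x_0$. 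Running your Kato-inequality computation with this test function instead yields $\|u_n(t)\|_{L^1_{x_0,\GM}}\le\|u_{0,n}\|_{L^1_{x_0,\GM}}\le\|u_0\|_{L^1_{\GH}}$ uniformly in $n$, $t$ and $x_0$, which is the estimate your argument actually needs. Once that replacement is made, your remaining steps (monotone convergence, passage to the limit in \eqref{def_eq}, time continuity, minimality of the limit) proceed essentially as you describe.
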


Moreover, as in \cite{BBGM,BV1}, it is possible to show that within this subclass, solutions are unique:
\begin{cor}[Uniqueness of limit WDS]
The WDS $u$ constructed in Theorem \ref{thm-existence} as a monotone limits of mild $ L^1(\hnn) \cap L^\infty(\hnn)$ solutions, does not depend on the particular choice of the monotone approximating sequence of initial data.
\end{cor}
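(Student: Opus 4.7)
The plan is to argue by a ``sandwich'' technique between two monotone approximating sequences, leveraging the well-established $L^1$-contraction and comparison properties that mild $L^1(\hn)\cap L^\infty(\hn)$ solutions enjoy in the nonlinear-semigroup framework (the operator $v \mapsto (-\Delta_M)^s(v^m)$ is $m$-accretive on $L^1$, so the associated semigroup is order-preserving and non-expansive). Concretely, let $\{u_{0,n}\}$ and $\{\tilde u_{0,n}\}$ be two nondecreasing sequences in $L^1(\hn)\cap L^\infty(\hn)$ pointwise converging to $u_0 \in L^1_{\GH}(\hn)$, and denote by $u_n(t)$, $\tilde u_n(t)$ the corresponding mild solutions, with respective monotone a.e.\ limits $u(t),\tilde u(t)$. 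The goal is to show $u(t)=\tilde u(t)$ a.e., from which it is standard to conclude independence from the specific approximation.

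For fixed $n$, the key device is the auxiliary sequence
\[
v_{n,m} := \min\{u_{0,n},\,\tilde u_{0,m}\}, \qquad m \in \mathbb{N},
\]
which is nondecreasing in $m$ and satisfies $v_{n,m} \uparrow u_{0,n}$ a.e.\ as $m\to\infty$, since $\tilde u_{0,m}\uparrow u_0\ge u_{0,n}$. Each $v_{n,m}$ lies in $L^1(\hn)\cap L^\infty(\hn)$, so it generates a mild solution $w_{n,m}(t)$. By the comparison principle for mild solutions, $v_{n,m}\le \tilde u_{0,m}$ yields $w_{n,m}(t)\le \tilde u_m(t)\le \tilde u(t)$ a.e. Monotonicity in $m$ of $v_{n,m}$ propagates, again by comparison, to monotonicity of $w_{n,m}(t)$ in $m$, and the dominated/monotone convergence plus the $L^1$-contraction of the semigroup (applied to $\|v_{n,m}-u_{0,n}\|_1 \to 0$) identifies the limit: $w_{n,m}(t)\uparrow u_n(t)$ a.e. Thus $u_n(t)\le \tilde u(t)$ a.e.; letting $n\to\infty$ gives $u(t)\le \tilde u(t)$. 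Reversing the roles of the two sequences gives the opposite inequality.

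Finally, for a general monotone approximation, the same sandwiching compares it with the canonical truncation-and-restriction sequence $u_{0,n}:=\min(u_0,n)\chi_{B_n(x_0)}\in L^1\cap L^\infty$, showing that \emph{any} admissible monotone limit coincides with the one produced via the canonical choice. The continuity in $C^0([0,T];L^1_{x_0,\GM}(\hn))$ required by Definition \ref{defi_WDS} is preserved in the passage to the limit by the weighted contraction already established in the existence proof of Theorem \ref{thm-existence}, so no regularity is lost.

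The main obstacle is the rigorous justification of the convergence step $w_{n,m}(t)\to u_n(t)$: one must verify that the mild semigroup is not only $L^1$-contractive on $L^1\cap L^\infty$ but also compatible with the weighted $L^1_{x_0,\GM}$ framework used to measure continuity in time of the WDS, so that the a.e.\ pointwise passage does not spoil the defining identity \eqref{def_eq}. This is handled by a straightforward application of the monotone convergence theorem to both sides of \eqref{def_eq}, using that $\GM(\cdot,x_0)$ is positive and that the test functions have compact support, so the integrals against $u_n^m\psi$ and against $(-\Delta_M)^{-s}u_n\,\partial_t\psi$ converge along the monotone sequence. Once these passages are secured, the sandwich argument outlined above closes the proof.
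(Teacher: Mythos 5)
Your sandwich argument via the auxiliary sequence $v_{n,m}=\min\{u_{0,n},\tilde u_{0,m}\}$, combined with the order-preserving and $L^1$-contractive properties of the mild semigroup on $L^1\cap L^\infty$, is exactly the standard mechanism used for this statement; the paper itself supplies no proof and instead defers to \cite{BBGM} and \cite{BV1}, where precisely this comparison-and-contraction scheme is carried out. Your proposal is therefore correct and essentially the same approach as the one the paper endorses.
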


We shall \it not \rm provide an explicit proof, of the above results, since they follow exactly along the same lines given in \cite{BBGM}. We just remark what follows:

\begin{itemize}

\item As concerns existence of mild solutions and weak dual solutions, nowadays classical nonlinear semigroup theory can be applied in the present case, to show existence of mild solutions, often called also semigroup or gradient-flow solutions. These are the building blocks for the existence theorem of WDS.

For nonnegative data in $ L^1(\hnn) \cap L^\infty(\hnn)$, a version of the celebrated Crandall-Liggett theorem applies also in this case (notice that the nonlinearity is concave, and the equation can be very singular), through the theory later developed in \cite{BCP}, see also \cite{CP}, that applies in the present setting with minor modifications. This allows to construct nonnegative $L^1$-mild solutions  in particular enjoying the time monotonicity property:
	\begin{equation}\label{mon-est.OLD}
	\mbox{the map} \quad t \mapsto t^{\frac{1}{m-1}} u(t,x) \quad \mbox{is (essentially) nonincreasing for a.e.~$x \in \hn $}
	\end{equation}
	and the $ L^p(M) $-nonexpansivity property:
	\begin{equation}\label{p-decay}
	\left\| u(t) \right\|_{L^p(\hn)} \leq \left\| u_0 \right\|_{L^p(\hn)} \qquad \mbox{for all $t\ge 0$ and all $1 \le p \leq \infty \, .$}
	\end{equation}
Finally, since $L^1(M)$ is included (with continuity) in $ L^1_{x_0,\GM}(\hn) $ for every $x_0 \in \hn$ these mild solutions turn out to be also WDS, see in particular  \cite[Section 5]{BBGM}.

\item A solid alternative is provided by the Brezis-Komura Theorem, that allows to build mild solutions in the Hilbert space $H^{-s}$, the dual of $H^s$. In this case existence and uniqueness of -possibly sign changing- mild solutions with data in $H^{-s}$ is also ensured, see Section 5.2 of \cite{BBGG}, where the case $m>1$ is analyzed but that indeed holds for all $m>0$. To the best of our knowledge, this is the biggest class in which existence and uniqueness holds for possibly sign changing solutions. This has been described with more details in Section 5 of both \cite{BBGM} and \cite{BBGG}, for the Porous Medium case $m>1$, but the same theory works indeed for all $m>0$.

\item For nonnegative initial data, to the best of our knowledge, the biggest class of data for which existence is ensured is precisely the one we find here, namely WDS with data in $L^1_{\GH}(\hnn)$. We notice that the proof of \cite[Theorem 2.4]{BBGM} works also in this case with minor changes: one first approximates the nonnegative initial datum  $u_0$, that merely belongs to  $ L^1_{\GM}(\hn) $, with a sequence of ``truncated'' data $ u_{0,n} \in L^1(M) \cap L^\infty(M) $ yielding approximate WDS which are finally extended to general WDS, by means of a standard limiting process relying on the stability property stated in Proposition \ref{LpPhi stability} below.
Actually, here the only advantage of the concave nonlinearity is represented by the fact that $u\in L^1_{\rm loc}$ implies $u^m\in L^1_{\rm loc}$, while in the case $m>1$ this is not true, and $L^1-L^\infty$ smoothing effects must enter into play. In the present case, no smoothing effects are needed in the proof of existence, which follows the same lines as in Section 6.1 of \cite{BBGM} and it is actually simpler in the present case. It is worth noticing that when $m$ is small, in the sense that $m<m_c:=\frac{N-2s}{N}$, solutions corresponding to $L^1$ data can be unbounded, as it well-known in the Euclidean case, see \cite{V2}, and can be easily proved as well in our case.

\end{itemize}

\subsection{Statement of the main results concerning smoothing effects}\label{smooth_sect}

Let us define the exponents:
\[
m_c:=\frac{N-2s}{N}\qquad\mbox{and}\qquad p_c:=\frac{N(1-m)}{2s}\,,
\]
needed to state our main results about $L^p-L^\infty$ smoothing estimates, with and without weights. The two exponents are related by the fact that $p_c \geq 1$ if and only if $m\in(0,m_c]$.

\begin{thm}[$L^p-L^\infty$ smoothing]\label{Thm.Smoothing} Let $ \hn $ satisfy Assumptions \ref{general} and  \ref{ch}. Furthermore, let $N>2s$, $m\in (0,1)$ and let $u$ be the nonnegative WDS of \eqref{FFE}, constructed in Theorem \ref{thm-existence} and corresponding to the initial datum $u_0\in  L^1_{\GH}(\hnn) \cap L^p(\hnn)$ with $p_c<p < +\infty$ if $m\in(0,m_c]$ or $1 \leq p < +\infty$ if $m\in(m_c,1)$. Then, for every $t> 0$ we have
		\begin{equation}\label{Thm.Smoothing.ineq}
	 	\|u(t)\|_\infty\leq \ka\;\frac{\|u_0\|^{2sp\vartheta_{p}}_p}{t^{N\vartheta_{p}}}\qquad\qquad\mbox{with}\quad \vartheta_{p}=\frac{1}{2sp-N(1-m)}>0
		\end{equation}
	where $0<\ka$ only depends on $N,m,s,p$. If only  Assumption \ref{general} is satisfied, then \eqref{Thm.Smoothing.ineq} holds provided that $0<t\leq \|u_0\|^{1-m}_{p} $ while for all $t\geq \|u_0\|^{1-m}_{p} $ we have
		  	\begin{equation}\label{Thm.Smoothing.ineqweight0}
		  \|u(t)\|_\infty \leq \ka_0\;\frac{\|u_0\|_{p}^{\frac{p}{p+m-1}}}{t^{\frac{1}{p+m-1}}}	\,,
		  \end{equation}
		  where $0<\ka_0$ only depends on $N,m,s,p$.
	

	\end{thm}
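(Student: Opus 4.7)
The plan is a Nash--Moser iteration tailored to \eqref{FFE}, built on the Stroock--Varopoulos inequality and on the fractional functional inequalities \eqref{Sobolev-frac}/\eqref{Nash-frac}. The starting point is the energy inequality: testing \eqref{FFE} against $u^{p-1}$ (formally justified by working on the bounded, smooth mild approximations of the WDS produced in Theorem \ref{thm-existence}, then passing to the limit by monotone convergence and stability of WDS) and using the Stroock--Varopoulos inequality for $(-\Delta_{\hn})^{s}$, one obtains
\[
\frac{d}{dt}\|u(t)\|_{p}^{p}\leq -\frac{4mp(p-1)}{(p+m-1)^{2}}\bigl\|(-\Delta_{\hn})^{s/2}u(t)^{(p+m-1)/2}\bigr\|_{2}^{2},
\]
which in particular re-confirms \eqref{p-decay}.

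\textbf{Sobolev regime (Assumptions \ref{general}+\ref{ch}).} The fractional Sobolev inequality \eqref{Sobolev-frac}, applied to $f=u^{(p+m-1)/2}$, bounds the right-hand side of the energy inequality from below by $C\,\|u(t)\|_{q}^{p+m-1}$ with $q=(p+m-1)N/(N-2s)$. The hypothesis $p>p_{c}$ is precisely $q>p$. The three-norm interpolation $\|u(t)\|_{p}\leq\|u(t)\|_{p+m-1}^{\theta}\|u(t)\|_{q}^{1-\theta}$ with $1-\theta=N(1-m)/(2sp)$, combined with the non-expansivity of the lower-order norms, closes a differential inequality of the form $Y'(t)\leq -C\,Y(t)^{1+\varepsilon}/\|u_0\|_{p+m-1}^{\delta}$ for $Y=\|u\|_p^p$, with $\varepsilon,\delta>0$ depending only on $N,m,s,p$. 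Direct integration gives a quantitative $L^{p}$-decay rate; the $L^{\infty}$ bound \eqref{Thm.Smoothing.ineq} then follows by iterating along a geometric sequence $p_{n+1}=(p_{n}+m-1)N/(N-2s)\nearrow\infty$, each step performed on a subinterval $[t_{n},t_{n+1}]$ with $t_{n}\nearrow t$; summing the geometric series of prefactors produces precisely the exponents $2sp\vartheta_{p}$ and $N\vartheta_{p}$ appearing in \eqref{Thm.Smoothing.ineq}.

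\textbf{Nash regime (Assumption \ref{general} only).} Without \eqref{Sobolev-frac} at our disposal, one uses the fractional Nash inequality \eqref{Nash-frac} applied to $f=u^{(p+m-1)/2}$, which yields
\[
\bigl\|(-\Delta_{\hn})^{s/2}u^{(p+m-1)/2}\bigr\|_{2}^{2}\,\|u\|_{(p+m-1)/2}^{(p+m-1)\cdot 2s/N}\geq C\,\|u\|_{p+m-1}^{(p+m-1)(1+2s/N)}.
\]
Controlling the lower-order factor by its initial value and re-running the Moser iteration reproduces \eqref{Thm.Smoothing.ineq} as long as $\|u(t)\|_{p}$ has not moved appreciably, i.e.\ for $t\lesssim\|u_{0}\|_{p}^{1-m}$, the intrinsic time-scale of the equation. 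Past this threshold, one restarts the argument from $t_{0}\asymp\|u_{0}\|_{p}^{1-m}$ with the strictly smaller norm $\|u(t_{0})\|_{p}$; this second pass delivers the slower rate \eqref{Thm.Smoothing.ineqweight0}, with crossover at $t\asymp\|u_{0}\|_{p}^{1-m}$.

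\textbf{Main obstacles.} The chief technical issue is the Moser book-keeping: keeping the prefactor series summable as $p_{n}\to\infty$ and tracking the iterated exponents until they match exactly $(2sp\vartheta_{p},N\vartheta_{p})$. A second difficulty is the rigorous justification of the formal Stroock--Varopoulos identity on a general WDS, which is handled by first working on bounded, $L^{1}\cap L^{\infty}$ mild approximations and then extending by the stability argument underlying Theorem \ref{thm-existence}. A final delicate point concerns the subcritical regime $p+m-1<1$ (possible for small $m$), where the usual $L^{p+m-1}$-non-expansivity must be replaced by the $L^{1}_{\GH}$ control provided by the datum.
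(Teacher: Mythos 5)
Your proposal takes a genuinely different route. The paper does \emph{not} run a Nash--Moser/Sobolev iteration; it uses the Green-function (potential) method: testing the equation against $u^{p-1}\,\GH(x_0,\cdot)$ together with Kato's inequality and the time-monotonicity \eqref{mon-est.OLD} gives the \emph{pointwise} bound \eqref{pointwise Lp estimate}, the Green-function integral is split into $B_R(x_0)$ and its complement, the $R$-dependence is controlled via \eqref{Hyp.Green.HN1}--\eqref{Hyp.Green.HN3} and \eqref{green-euc}, a De Giorgi iteration lemma removes the $\|u\|_\infty$ term on the right, and $R$ is optimized. The dichotomy between Assumption \ref{general} and Assumptions \ref{general}+\ref{ch} arises exactly because \eqref{Hyp.Green.HN3} (valid for all $R>0$) needs Cartan--Hadamard, while \eqref{Hyp.Green.HN1} only works for $R\le1$, which is why under \ref{general} alone the paper gets \eqref{Thm.Smoothing.ineq} only up to $t\lesssim\|u_0\|_p^{1-m}$ and then the crude bound \eqref{Thm.Smoothing.ineqweight0} by freezing $R=1$. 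A real virtue of that approach is that it carries over verbatim to the weighted spaces $L^p_{\GHH}$ (Theorem \ref{Thm.SmoothingPhi}), where no Sobolev inequality is available, so a Moser iteration has no starting point there.

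Your energy/Moser route is a reasonable alternative and, if carried out, could in fact \emph{not} need Assumption \ref{ch} (the fractional Sobolev inequality \eqref{Sobolev-frac} follows from Assumption \ref{general} alone, without any curvature sign), so it might even remove the small-time restriction in the second part of the statement. However, as written the proposal has concrete gaps.
(i) \emph{The endpoint $p=1$.} The Stroock--Varopoulos inequality \eqref{Stroo} requires $q=\frac{p+m-1}{m}>1$, i.e. $p>1$. The theorem explicitly allows $p=1$ when $m\in(m_c,1)$, and there your basic energy inequality has no content; the paper's pointwise bound \eqref{pointwise Lp estimate} covers $p=1$ by a separate argument (testing \eqref{def_eq} with an approximation of $\delta_{x_0}\chi_{[t_0,t_1]}$).
(ii) \emph{The lower-order norm.} Your closed ODE for $Y=\|u\|_p^p$ involves $\|u_0\|_{p+m-1}$. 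When $p+m-1<1$ this quantity is not a norm and the $L^{p+m-1}$-nonexpansivity you invoke is false; when $p+m-1\ge1$ you still have no control on it, since the hypotheses only give $u_0\in L^1_{\GH}\cap L^p$, not $u_0\in L^{p+m-1}$. The brief remark that $L^1_{\GH}$ control fixes this is not substantiated; a weighted $L^1$ quantity does not interpolate with $L^p$ in the required way. Moreover, a check of exponents shows that the resulting ODE has the supercritical form $Y'\le -cY^{1+\varepsilon}$ only for $p>p_c+1-m$, strictly larger than the stated range $p>p_c$, so the argument at the level of a single fixed exponent does not close on the full admissible range; you would need the genuine iteration $p_n\to\infty$ from the outset, with prefactor bookkeeping that is entirely omitted.
(iii) \emph{The large-time bound.} The ``restart at $t_0\asymp\|u_0\|_p^{1-m}$'' heuristic does not obviously produce the specific rate $t^{-1/(p+m-1)}$ of \eqref{Thm.Smoothing.ineqweight0}; restarting the same estimate ordinarily reproduces the same rate, translated in time. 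In the paper this bound comes out cleanly by fixing $R=1$ in \eqref{R1}, which is a mechanism your approach does not contain.

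In summary: a legitimate alternative strategy, likely to work after nontrivial repairs (and possibly yielding a slight improvement under Assumption \ref{general} alone), but presently incomplete at the endpoint $p=1$, in the control of the lower-order norm, and in the exponent/prefactor bookkeeping, and unable to reproduce the specific large-time rate.
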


 We notice that, for those values of $p,m, N$ considered in Theorem \ref{Thm.Smoothing.ineq}, $N\vartheta_{p}>\frac{1}{p+m-1}$, therefore \eqref{Thm.Smoothing.ineq} yields a better bound than \eqref{Thm.Smoothing.ineqweight0} for $t$ large. This is somehow in accordance with the statement of Theorem \ref{ext} below where we prove that when Assumption \ref{neg} holds (and, in turn, Assumption \ref{ch} as well), then solutions extinguish in finite time. 

\begin{thm}[Extinction Time]\label{ext}
Let $ \hn $ satisfy Assumptions \ref{general} and \ref{neg}.  Let $u$ be a nonnegative WDS corresponding to the initial datum $u_0\in  L^1_{\GH}(\hnn) \cap L^p(\hnn)$ with $p>1$ and $p\geq p_c$ (namely, $p_c\leq p <+\infty$ if $0<m<m_c$ and $1<p<+\infty$ for $m\in [m_c,1)$). Then $u$ extinguishes at a finite time $T=T(u_0)$ and
	for every $0 \le t_0\leq t \le T$ we have
	\begin{equation}\label{Lp estimate}
		c_p(T-t)\le \|u(t)\|_p^{1-m} \le \|u(t_0)\|_p^{1-m} - c_p (t-t_0)\,,
	\end{equation}
	where  $c_p>0$  only depends on $p,m,s,N$ and $c_p \rightarrow 0$ as $p \rightarrow 1^+$. When only  Assumption \ref{general} is satisfied the statement holds under the restriction that $p= p_c$ and $0<m<m_c$.
\end{thm}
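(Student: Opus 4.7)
My plan is to establish a closed differential inequality of the form
\begin{equation*}
\frac{d}{dt}\|u(t)\|_p^p\;\leq\;-C_p\,\|u(t)\|_p^{p+m-1},
\end{equation*}
which upon setting $\phi(t):=\|u(t)\|_p^{1-m}$ becomes the linear inequality $\phi'(t)\leq -c_p$ with $c_p=\tfrac{1-m}{p}\,C_p$. Integrating this from $t_0$ to $t$ yields the upper bound in \eqref{Lp estimate}; since $\phi\geq 0$, extinction at some finite $T\leq t_0+\phi(t_0)/c_p$ is forced, and integrating instead from $t$ to $T$ and using $\phi(T)=0$ gives the lower bound $c_p(T-t)\leq \phi(t)$.

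The derivation starts at the level of the smooth, strictly positive mild-solution approximations underlying Theorem \ref{thm-existence}. Multiplying the equation by $u^{p-1}$ and integrating on $M$, the \emph{Stroock--Varopoulos-type} inequality
\begin{equation*}
\int_{M} u^{p-1}\,(-\Delta_M)^s(u^m)\,\dg\;\geq\;\frac{4m(p-1)}{(p+m-1)^2}\,\bigl\|(-\Delta_M)^{s/2}\bigl(u^{(p+m-1)/2}\bigr)\bigr\|_2^2
\end{equation*}
(which follows on $M$ from the heat-semigroup representation \eqref{Ks} together with the pointwise algebraic inequality $(a^{p-1}-b^{p-1})(a^m-b^m)\geq \tfrac{4m(p-1)}{(p+m-1)^2}(a^{(p+m-1)/2}-b^{(p+m-1)/2})^2$ valid for $a,b\geq 0$) produces the energy estimate
\begin{equation*}
\tfrac{1}{p}\tfrac{d}{dt}\|u\|_p^p\;\leq\;-\tfrac{4m(p-1)}{(p+m-1)^2}\,\|(-\Delta_M)^{s/2}v\|_2^2\,,\qquad v:=u^{(p+m-1)/2}.
\end{equation*}

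The crucial step is to compare $\|(-\Delta_M)^{s/2}v\|_2$ with $\|u\|_p^{(p+m-1)/2}$. Since $\|v\|_q^2=\|u\|_{q(p+m-1)/2}^{p+m-1}$, the choice $q=2p/(p+m-1)$ yields exactly $\|u\|_p^{p+m-1}$; this $q$ belongs to $[2,\,2N/(N-2s)]$ precisely when $p\geq p_c$, which is the origin of the threshold in the hypothesis. Under Assumption \ref{neg} the spectral gap of $-\Delta_M$ implied by \eqref{poi} yields the fractional Poincar\'e inequality $\|v\|_2\leq C\|(-\Delta_M)^{s/2}v\|_2$, and combining this endpoint with the fractional Sobolev inequality \eqref{Sobolev-frac} by standard $L^q$-interpolation gives $\|v\|_q\leq C\|(-\Delta_M)^{s/2}v\|_2$ for every $q\in[2,\,2N/(N-2s)]$; substituting $q=2p/(p+m-1)$ closes the ODI. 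When only Assumption \ref{general} is available, only the Sobolev endpoint $q=2N/(N-2s)$ is at our disposal, which forces $p=p_c$, and the side condition $m<m_c$ is what guarantees $p_c>1$, ensuring that the Stroock--Varopoulos prefactor does not vanish.

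The main obstacle will be the rigorous justification of the formal energy computation at the level of weak dual solutions: I would approximate $u_0$ by bounded, compactly supported data producing regular mild solutions for which the calculation is legitimate, and then pass to the limit using the lower semicontinuity of the fractional Dirichlet form together with the $L^p$-nonexpansivity \eqref{p-decay} and the WDS stability from Proposition \ref{LpPhi stability}. Finally, the degeneration $c_p\to 0$ as $p\to 1^+$ stated in the theorem reflects exactly the vanishing of the Stroock--Varopoulos prefactor $4m(p-1)/(p+m-1)^2$ at $p=1$, while the constants coming from the functional inequalities remain bounded uniformly in $p$.
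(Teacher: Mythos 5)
Your proposal is correct and follows essentially the same route as the paper: Stroock--Varopoulos applied with $v=u^m$, $q=(p+m-1)/m$ (equivalently the pointwise inequality you quote), interpolation between the fractional Sobolev inequality \eqref{Sobolev-frac} and the Poincar\'e inequality at the exponent $2p/(p+m-1)$, and integration of the resulting ODI for $\|u(t)\|_p^{1-m}$, with the endpoint $p=p_c$ handled by Sobolev alone. You are somewhat more explicit than the paper in noting that \eqref{poi} must be upgraded to its fractional counterpart $\|v\|_2\le C\|(-\Delta_M)^{s/2}v\|_2$ via the spectral gap, and in identifying that $c_p\to 0$ as $p\to1^+$ comes precisely from the vanishing of the Stroock--Varopoulos prefactor.
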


Some comments are in order about the sharpness of the extinction rate provided by Theorem \ref{ext}. To this aim, let $H^{-s}(\hn)$ denote the dual of $H^s(\hn)$, with the Hilbertian norm given by
\begin{equation*}
\|u\|_{H^{-s}(\hn)}^2=\int_{\hn} u(-\Delta_{\hn})^{-s} u\, \dg = \int_{\hn} \left|(-\Delta_{\hn})^{-s/2} u\right|^2 \dg
\end{equation*}
and the corresponding scalar product. For all $u\in L^{1+m}(\hn)\cap H^{-s}(\hn) \setminus\{0\}$, one may define the ``Dual'' Nonlinear Rayleigh Quotient as follows
\begin{align*}
	\mathcal{Q}^*[u]:= \frac{\|u\|_{1+m}^{1+m}}{\|u\|_{H^{-s}}^{1+m}}\,.
\end{align*}
The  same computations as in \cite[Proposition 8.3]{BF23} allow to check that $\frac{\rd}{\dt}\mathcal{Q}^*[u(t)]\leq 0$ along the flow and in turn to prove:

\begin{prop}[Sharp $L^{1+m}$ extinction rate]
	Let $ \hn $ satisfy Assumptions \ref{general} and \ref{neg} and let $u$ be a nonnegative WDS with initial data $u_0\in  L^1_{\GH} (M)\cap L^{1+m}(M)\cap H^{-s}(\hn)$. If there exists  an  extinction time $T>0$, then
	\begin{align*}
	\|u(t)\|_{H^{-s}}\leq c_1\;\mathcal{Q}^*[u_0]^{\frac{1}{1-m}} (T-t)^{\frac{1}{1-m}}\qquad\mbox{for every }\;0\leq t<T,
	\end{align*}
	with $c_1=(1-m)^{\frac{1}{1-m}}$. Moreover, there  exists  $c_*>0$ depending only on $m$ such that for all $t\in [0,T]$
	\begin{equation}\label{Lm+1 estimate}
		\|u(t)\|_{1+m}^{1+m} \le c_*^{1+m}\, \mathcal{Q}^*[u_0]^{\frac{2}{1-m}}
		\left\{\begin{array}{lll}
			  \frac{T^{\frac{2}{1-m}}}{t}  & \qquad \mbox{when } 0< t< \frac{T}{3},\\
			  (T-t)^{\frac{1+m}{1-m}} & \qquad \mbox{when }  \frac{T}{3}\leq t \leq T\,.
		\end{array}\right.
	\end{equation}
\end{prop}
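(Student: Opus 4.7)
The plan is to reduce the problem to a scalar ODE for $X(t):=\|u(t)\|_{H^{-s}}$ and then exploit the monotonicity $\frac{d}{dt}\mathcal{Q}^*[u(t)]\leq 0$ already stated in the text. First I would establish the basic energy identity
\begin{equation*}
\frac{d}{dt}\|u(t)\|_{H^{-s}}^2 = -2\,\|u(t)\|_{1+m}^{1+m},
\end{equation*}
formally obtained by testing the dual formulation of \eqref{FFE} with $\psi=(-\Delta_M)^{-s}u$. Rewriting the right-hand side as $-2\,\mathcal{Q}^*[u(t)]\,X(t)^{1+m}$ yields $(X^{1-m})'(t)=-(1-m)\,\mathcal{Q}^*[u(t)]$, which, once integrated over $[t,T]$ using $X(T)=0$ (the extinction hypothesis) and the monotonicity bound $\mathcal{Q}^*[u(s)]\leq \mathcal{Q}^*[u_0]$ in the correct direction, gives
\begin{equation*}
X(t)^{1-m}=(1-m)\!\int_t^T\!\mathcal{Q}^*[u(s)]\,ds\,\leq\, (1-m)\,\mathcal{Q}^*[u_0]\,(T-t),
\end{equation*}
which is precisely the $H^{-s}$ estimate with $c_1=(1-m)^{1/(1-m)}$.

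For the first branch of the $L^{1+m}$ bound (in fact valid on the whole $[0,T]$) I would combine the identity $\|u(t)\|_{1+m}^{1+m}=\mathcal{Q}^*[u(t)]\,X(t)^{1+m}$ with $\mathcal{Q}^*[u(t)]\leq \mathcal{Q}^*[u_0]$ and the $H^{-s}$ bound just established; the exponent $2/(1-m)$ appears from $1+(1+m)/(1-m)$. The second branch needs a separate argument since pure substitution only delivers the factor $(T-t)^{(1+m)/(1-m)}$, which vanishes at $t=T$ and gives no $1/t$ information. Here I would use that $s\mapsto\|u(s)\|_{1+m}^{1+m}$ is nonincreasing by \eqref{p-decay}, combined with the integrated form of the energy identity $\int_0^t\!\|u(s)\|_{1+m}^{1+m}\,ds\leq \tfrac12\|u_0\|_{H^{-s}}^2$, to obtain $t\,\|u(t)\|_{1+m}^{1+m}\leq \tfrac12\|u_0\|_{H^{-s}}^2$. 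Substituting the first-branch bound evaluated at $t=0$, namely $\|u_0\|_{H^{-s}}^2\leq c_1^2\,\mathcal{Q}^*[u_0]^{2/(1-m)}T^{2/(1-m)}$, then closes the argument and fixes $c_*$ depending only on $m$.

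The real obstacle is not the ODE manipulation, which is elementary, but the rigorous justification of the energy identity and of the monotonicity of $\mathcal{Q}^*[u(t)]$ at the level of weak dual solutions, which are a priori too rough to be differentiated in time. The strategy, as the text suggests by reference to \cite[Prop.~8.3]{BF23}, is to perform both computations on the $H^{-s}$-mild solutions provided by the Brezis--Komura construction recalled in Section \ref{def-wds}, where the chain rule for $\frac{d}{dt}\|u\|_{H^{-s}}^2$ is available, and then pass to the limit along the monotone approximating sequence used to build the WDS, exploiting lower semicontinuity of the quadratic form $\langle\cdot,(-\Delta_M)^{-s}\cdot\rangle$ and continuity of $\|\cdot\|_{1+m}^{1+m}$ along that sequence.
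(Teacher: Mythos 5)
Your proof is correct and follows the approach the paper intends by citing \cite[Proposition 8.3]{BF23}: derive the $H^{-s}$ energy identity, recast it through the Rayleigh quotient as the scalar ODE $(X^{1-m})'=-(1-m)\,\mathcal{Q}^*[u(t)]$, integrate using the extinction hypothesis and the monotonicity of $\mathcal{Q}^*$, and obtain the second branch of \eqref{Lm+1 estimate} from the time-monotonicity of the $L^{1+m}$ norm together with the integrated energy identity. Your observation that the first branch of \eqref{Lm+1 estimate} actually holds for all $t\in[0,T]$ is correct, and your sketch for rigorizing the computation on weak dual solutions via the $H^{-s}$-mild solutions of the Brezis--Komura construction is the intended route.
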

When $p=1+m$ and $m>m_s:=\frac{N-2s}{2s+N}$, then $1+m> p_c$ and the left hand side of \eqref{Lp estimate} combined with \eqref{Lm+1 estimate} yield
 $$c_p^{\frac{1+m}{1-m}} (T-t)^{\frac{1+m}{1-m}} \leq \|u(t)\|_{1+m}^{1+m} \le c_*^{1+m}\, \mathcal{Q}^*[u_0]^{\frac{2}{1-m}} (T-t)^{\frac{1+m}{1-m}}  \qquad \mbox{when } \frac{T}{3}\leq t \leq T\,,$$
showing that $(T-t)^{\frac{1+m}{1-m}}$  is the sharp extinction decay.  If $m=m_s,$ then $m+1=p_c$, the bound still holds but, since $m_s<m_c$, Theorem \ref{Thm.Smoothing} does not apply and cannot guarantee a priori that solutions with data in $L^{1+m}$ are bounded. In the Euclidean case when $s=1$, i.e. for the ``classical'' fast diffusion posed on the whole space $\mathbb{R}^N$, there exist Very Singular Solutions (VSS, they exist only when $m< m_c$) with an explicit separate-variables form: they are not bounded and still they extinguish at a finite time with the above rate, see for instance the monograph \cite{V2}. In the fractional Euclidean case $s<1$, such VSS still exist and are unbounded and still extinguish at the same rate, but they do not have an explicit form (in the  spacial variables), see \cite{Vjems, VV}.

When enlarging the class of allowed initial data, i.e.~when dealing with the space $L^p_{\GH}(\hnn)$ in place of $ L^p(\hnn) $,  we obtain the following $L^p_{\GH}$-$L^{\infty}$ smoothing estimates.

\begin{thm}[$L^p_{\GH}-L^\infty$ smoothing]	\label{Thm.SmoothingPhi} Let $ \hn $ satisfy Assumption \ref{general}. Furthermore, let $N>2s$ and $m\in (0,1)$. Let $u$ be the nonnegative WDS of \eqref{FFE}, constructed in Theorem \ref{thm-existence} and corresponding to the initial datum $u_0\in  L^1_{\GH}\cap L^p_{\GH}(\hnn)$ with $1 \leq p < +\infty$ if $m\in(m_c,1)$ or $p_c<p < +\infty$ if $m\in(0,m_c]$. Then, for all $0<t\leq \|u_0\|^{1-m}_{L^p_{\GH}} $ we have
		\begin{equation}\label{Thm.Smoothing.ineqweight}
		  \|u(t)\|_\infty \leq \ka_1\;\frac{\|u_0\|_{L^p_{\GH}}^{2sp\,\vartheta_{p} }}{t^{N\vartheta_{p}}}	
		  \end{equation}
		  while for all $t\geq \|u_0\|^{1-m}_{L^p_{\GH}} $ we have
		  	\begin{equation}\label{Thm.Smoothing.ineqweight2}
		  \|u(t)\|_\infty \leq \ka_2\;\frac{\|u_0\|_{L^p_{\GH}}^{\frac{p}{p+m-1}}}{t^{\frac{1}{p+m-1}}}	\,,
		  \end{equation}
		  where $0<\ka_1, \ka_2$ only depend on $N,m,s,p$.

		
	\end{thm}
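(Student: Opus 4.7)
The plan is to run the same Moser-type iteration that yields Theorem~\ref{Thm.Smoothing} under the sole Assumption~\ref{general}, but with the weighted $L^p_{\GHH}$-norm in place of the $L^p(M)$-norm. By the stability statement of Proposition~\ref{LpPhi stability} together with the construction of Theorem~\ref{thm-existence}, it suffices to establish the bounds for WDS whose initial data $u_{0,n}$ lie in $L^1(M)\cap L^\infty(M)$, approximate $u_0$ in $L^1_{\GH}$, and satisfy $\|u_{0,n}\|_{L^p_{\GHH}}\le\|u_0\|_{L^p_{\GHH}}$ (for instance obtained by truncating both in height and spatially). For such bounded solutions every manipulation below is justified, and the estimates pass to the limit since the right-hand sides of \eqref{Thm.Smoothing.ineqweight}--\eqref{Thm.Smoothing.ineqweight2} depend only on $\|u_0\|_{L^p_{\GHH}}$ and $t$.

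The core step is an energy inequality. Testing \eqref{FFE} against $u^{p-1}$ and invoking the Stroock-Varopoulos inequality for the spectral fractional Laplacian on $M$ gives
\begin{equation*}
\frac{1}{p}\frac{\rd}{\dt}\int_{\hn} u^p\,\dg + c_{m,p}\int_{\hn}\left|(-\Delta_{\hn})^{s/2} u^{\frac{p+m-1}{2}}\right|^2\dg \le 0,
\end{equation*}
which, combined with the fractional Euclidean-type Nash inequality \eqref{Nash-frac} of Proposition~\ref{fracnash}, becomes a closed differential inequality relating $\|u(t)\|_p$ to $\|u(t)\|_1$. The weighted norm enters via the dual identity $\partial_t[(-\Delta_{\hn})^{-s}u(t,x_0)]=-u^m(t,x_0)\le 0$, which, rewritten through the Green kernel, produces the pointwise potential monotonicity
\begin{equation*}
\int_{\hn}\GHH(x_0,y)\,u(t,y)\,\dg(y)\le \int_{\hn}\GHH(x_0,y)\,u_0(y)\,\dg(y)\qquad\text{for all } x_0\in \hn,\ t\ge 0.
\end{equation*}
Coupling the energy inequality with this weighted $L^1$ contraction along a dyadic chain of exponents $p_k\uparrow\infty$, in the standard fast-diffusion Moser fashion, yields \eqref{Thm.Smoothing.ineqweight} on the short-time window $(0,\|u_0\|_{L^p_{\GHH}}^{1-m}]$. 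The complementary long-time regime \eqref{Thm.Smoothing.ineqweight2} follows by restarting the iteration at $t_0=\|u_0\|_{L^p_{\GHH}}^{1-m}$ and invoking the $L^p(M)$-nonexpansivity \eqref{p-decay} together with the Benilan-Crandall time monotonicity \eqref{mon-est.OLD}.

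The main obstacle will be making the Moser iteration compatible with the splitting \eqref{normLGx0} of the target norm uniformly in $x_0$: Stroock-Varopoulos and Nash produce unweighted $L^p(M)$ quantities, whereas $\|\cdot\|_{L^p_{x_0,\GH}}^p$ genuinely decomposes into a local piece on $B_1(x_0)$, where $\GHH(\cdot,x_0)$ is singular, and a tail piece weighted by $\GHH(\cdot,x_0)$ on $M\setminus B_1(x_0)$. Bridging the two requires the Euclidean-type pointwise bound \eqref{green-euc} to keep the tail integrable, the potential monotonicity above to control it by its initial value, and the uniform-in-$x_0$ technical estimates collected in Section~\ref{technical} to ensure that the constants $\kappa_1,\kappa_2$ finally depend only on $N,m,s,p$; once this bookkeeping is carried out, the iteration closes in the usual way.
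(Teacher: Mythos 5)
Your approach differs fundamentally from the paper's, and unfortunately the gap you flag at the end is not a bookkeeping issue but the crux of the matter, and your scheme does not close it. The paper does not run a Moser iteration with Stroock--Varopoulos and Nash at all for this theorem: it reuses the Green-function pointwise estimate \eqref{pointwise Lp estimate} together with the De Giorgi numerical iteration lemma from Step~1 of the proof of Theorem~\ref{Thm.Smoothing}, i.e.\ the bound
$u^{p+m-1}(t,x_0)\le \c_{p,m}\,t_1^{\frac{p+m-1}{1-m}}(t-\tau)^{-\frac{p}{1-m}}\int_{\hn}u^p(\tau,\cdot)\,\GHH(x_0,\cdot)\,\dg$,
which by design produces on the right-hand side exactly the weighted quantity $\int u^p\,\GHH(x_0,\cdot)\,\dg$, controllable by $\|u_0\|_{L^p_{x_0,\GHH}}$ via Lemma~\ref{LpPhi stability}, \eqref{Hyp.Green.HN1}, and \eqref{green-euc}. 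Your Stroock--Varopoulos/Nash differential inequality instead relates unweighted $L^p$ norms to unweighted $L^1$ norms, and the resulting iterative bound on $\|u_n(t)\|_\infty$ is expressed in $\|u_{0,n}\|_p$. Since typical $u_0\in L^p_{\GHH}\setminus L^p$, the approximants have $\|u_{0,n}\|_p\to\infty$, so that bound is vacuous in the limit; the potential monotonicity $\int \GHH(x_0,\cdot)u(t)\,\dg\le\int\GHH(x_0,\cdot)u_0\,\dg$ does not plug into the Nash chain, which needs genuine $L^{p_k}(M)$ norms at every rung. In short, you would be proving (a weaker form of) Theorem~\ref{Thm.Smoothing}, not Theorem~\ref{Thm.SmoothingPhi}: the passage from $\|u_0\|_p$ to $\|u_0\|_{L^p_{\GHH}}$ on the right-hand side is precisely what the Green-function method buys and what Moser iteration, as you have set it up, cannot deliver.

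A concrete way to repair your argument is to abandon the energy/Nash route and start from the pointwise estimate above: split the weighted integral at radius $R$, estimate the inner piece by H\"older plus \eqref{Hyp.Green.HN1} and $\|u(\tau)\|_{L^p_{x_0,\GHH}}\le\|u_0\|_{L^p_{x_0,\GHH}}$, estimate the outer piece by further splitting at radius $1$ and using \eqref{green-euc} on the annulus and $\GHH\le C$ outside, then absorb the residual $\|u(\tau)\|_\infty$ via De Giorgi's lemma and optimize in $R\le 1$ (short times) or fix $R=1$ (long times). That is what the paper actually does, and it is short precisely because the hard machinery was already established in Step~1 of Theorem~\ref{Thm.Smoothing}.
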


\subsection{Lower bound and infinite speed of propagation}\label{lower}

We now state our result concerning \it lower bounds \rm for the solutions considered. It reads as follows.
 \nero
 \begin{thm}[Infinite speed of propagation and pointwise lower bound]\label{lowerth}
Let $ \hn $ satisfy Assumptions \ref{general}. Furthermore, let $u$ be a nonnegative nontrivial WDS corresponding to the nonnegative initial datum $u_0\in  L^1_{\GH}(\hnn)$ and assume that there exists an extinction time $T=T(u_0)$ of $u$. For all $0<t<T$, $x\in M$, there holds
\begin{equation}\label{pointwise0}
u^m(t,x)\geq
C  \frac{t^{\frac{m}{1-m}}}{(1-m)T^{\frac{1}{1-m}}} \, \left\| u (t) \right\|_{L^1_{x,\GM}}
	 \end{equation}
  for some $C=C(N,k,c,s)$. Hence, solutions have infinite speed of propagation, in the sense that solutions corresponding to data with compact support become instantaneously supported in the whole $M$. In particular, if also Assumptions \ref{ch}  is satisfied and $u_0\in L^{1}(\hnn)\cap L^{p}(\hnn)$ with $p>\frac{N}{2s}$, there exist $0<t_0(u_0)\leq T(u_0)$ and $C_m=C_m(N,k,c,s,m,p)>0$ such that
\begin{equation}\label{pointwise}
u^m(t,x)\geq C_m\frac{t^{\frac{m}{1-m}} }{T^{\frac{1}{1-m}}}\, \left(1\wedge r(x_0,x)^{N-2s}\right) \GM(x,x_0)\,\|u_0\|_{L^1(B_R(x_0))}
 \end{equation}
for all $t\in[0,t_0(u_0)]$ and all $x\in M\setminus \{x_0\}$,  where $x_0$ is a point such that $\|u_0\|_{L^1(B_R(x_0))}>0$ for some $R>0$.
\end{thm}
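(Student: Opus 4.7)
The plan is to exploit the dual formulation $\partial_t[(-\Delta_{\hn})^{-s}u] + u^m = 0$ encoded in Definition~\ref{defi_WDS}. Integrating pointwise in $x$ from $t$ to the extinction time $T$, and using $u(T)\equiv 0$ to kill the boundary term, I obtain the identity
\begin{equation*}
(-\Delta_{\hn})^{-s} u(t,x) \;=\; \int_t^T u^m(\tau, x) \, \rd\tau \, .
\end{equation*}
This identity will be made rigorous by approximating $u_0$ with bounded $L^1$-data, working with the corresponding mild solutions (for which the dual equation holds pointwise), and passing to the monotone limit. I will then invoke the time monotonicity \eqref{mon-est.OLD}, which for $m<1$ rewrites as $u(\tau,x) \le (\tau/t)^{1/(1-m)}\, u(t,x)$ for $\tau \ge t$. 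Raising to the $m$-th power and inserting into the integral identity, the elementary computation $\int_t^T \tau^{m/(1-m)}\, \rd\tau \le (1-m)\, T^{1/(1-m)}$ yields
\begin{equation*}
u^m(t,x) \;\ge\; \frac{t^{m/(1-m)}}{(1-m)\, T^{1/(1-m)}} \int_{\hn} u(t,y)\, \GM(x,y)\, \dg(y) \, .
\end{equation*}

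To recover \eqref{pointwise0} (in the natural reading $x_0 = x$), I split the Green potential over $B_1(x)$ and its complement. The integral on $\hn \setminus B_1(x)$ is already the nonlocal part of the norm \eqref{normLGx0}. For the piece on $B_1(x)$, I rely on Assumption~\ref{general}: the Ricci lower bound combined with Faber--Krahn provides a Gaussian lower bound for $k_{\hn}$ matching \eqref{gaussian}, which integrated against $1/t^{1-s}$ gives the near-diagonal estimate $\GM(x,y) \ge c\, r(x,y)^{-(N-2s)}$; in particular $\GM(x,y)\ge c>0$ whenever $r(x,y) \le 1$. This delivers \eqref{pointwise0}, and infinite speed of propagation follows at once: the right-hand side is strictly positive as long as $u(t) \not\equiv 0$, which is enforced by $0<t<T$.

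For the refined estimate \eqref{pointwise}, the integrability $u_0 \in L^p$ with $p > N/(2s)$ activates the smoothing \eqref{Thm.Smoothing.ineq} on a short interval $[0,t_0(u_0)]$, producing a uniform $L^\infty$-bound on $u(t)$. Combining this with the $L^1$-nonexpansivity \eqref{p-decay} and a short-time continuity argument based on the dual equation, I expect to obtain $\|u(t)\|_1 \ge c\,\|u_0\|_1$ on $[0,t_0]$. The key remaining ingredient, which I expect to be the main technical obstacle, is a two-point Green function comparison of the form
\begin{equation*}
\GM(x,y) \;\ge\; c\,\bigl(1 \wedge r(x_0,x)^{N-2s}\bigr)\, \GM(x,x_0), \qquad y \in B_1(x_0),\ x \in \hn \setminus \{x_0\} ,
\end{equation*}
which I would prove by treating separately the near regime $r(x,x_0)\le 1$, where \eqref{green-euc} and its lower counterpart give Euclidean-type comparability, and the far regime $r(x,x_0)\ge 1$, where the two-sided Gaussian heat kernel bounds available under Assumption~\ref{ch} let one compare $\GM(x,y)$ with $\GM(x,x_0)$ uniformly for $y$ in a bounded neighbourhood of $x_0$ after integrating the heat kernel against $1/t^{1-s}$. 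Restricting the integration in the pointwise lower bound for $u^m(t,x)$ to $y\in B_1(x_0)$ and combining with the two estimates above will yield \eqref{pointwise}.
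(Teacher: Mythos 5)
For the main bound \eqref{pointwise0} your argument reproduces the paper's: both start from the dual relation $\partial_t[(-\Delta)^{-s}u]+u^m=0$ integrated in time (the paper tests \eqref{def_eq} with approximations of $\delta_{x_0}\chi_{[t_0,t_1]}$, which is the rigorous version of your pointwise integration), both invoke the time monotonicity $u(\tau)\le(\tau/t)^{1/(1-m)}u(t)$ raised to the $m$-th power to bound $\int_t^T u^m(\tau,x)\,\rd\tau$ by $(1-m)\,T^{1/(1-m)}\,t^{-m/(1-m)}\,u^m(t,x)$, and both use the near-diagonal Green lower bound \eqref{green-est-low} to identify the Green potential of $u(t)$ with the norm $\|u(t)\|_{L^1_{x,\GM}}$ after splitting over $B_1(x)$ and its complement. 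The reading $x=x_0$ is indeed the intended one. The first half of the statement is handled correctly and by the same route.

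For the refined bound \eqref{pointwise} you diverge from the paper, and there is a genuine gap. After restricting the $y$-integral to $B_1(x_0)$ and applying your two-point comparison $\GM(x,y)\ge c\,(1\wedge r(x_0,x)^{N-2s})\,\GM(x,x_0)$, the surviving factor is $\int_{B_1(x_0)}u(t,y)\,\dg(y)$, not $\|u(t)\|_1$. Your proposed ingredient, a short-time bound $\|u(t)\|_1\ge c\,\|u_0\|_1$, controls only the global mass; it says nothing about how much of it remains inside the fixed ball, and for a nonlocal, mass-losing flow there is no automatic local conservation, even when $u_0$ is supported in $B_1(x_0)$. The paper avoids the ball restriction altogether: it tracks the full Green potential $P(t,x):=\int_M u(t,y)\,\GHH(x,y)\,\dg(y)$ (precisely the quantity appearing in the first-line lower bound of \eqref{pointwise Lp estimate_low}), observes that $\frac{\rd}{\dt}P(t,x)=-u^m(t,x)\ge -t^{-\alpha}M_0^m$ with $\alpha=Nm\vartheta_p\in(0,1)$ by the smoothing \eqref{Thm.Smoothing.ineq} — this is exactly where $p>N/(2s)$ enters, making $\alpha<1$ so the error $t^{1-\alpha}$ vanishes as $t\to 0$ — and integrates to get $P(t,x)\ge\tfrac12 P(0,x)$ on $[0,t_0(u_0)]$. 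Lemma \ref{lem.Phi.estimates} applied to $u_0$ then yields the $(1\wedge r(x_0,x)^{N-2s})\,\GHH(x,x_0)\,\|u_0\|_1$ factor in one step, with no need for a separate $L^1$ stability or a new pointwise Green comparison. Attempting to repair your version by controlling $\frac{\rd}{\dt}\int_{B_1(x_0)}u(t)\,\dg$ would force you to test the dual equation with a compactly supported $\psi$, and what you would end up estimating is again $\int u(t)\,(-\Delta)^{-s}\psi$ — that is, the paper's object.
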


\begin{rem} We comment in first place that a class of data s.t. the corresponding solution vanishes in finite time is given in Theorem \ref{ext}, in particular one can take $u_0\in  L^1_{\GH}(\hnn) \cap L^p(\hnn)$ with $p>1$ and $p\geq p_c$ (namely, $p_c\leq p <+\infty$ if $0<m<m_c$ and $1<p<+\infty$ for $m\in [m_c,1)$). However, the core of the above result is \it instantaneous positivity\rm, which follows for \it general \rm data $u_0\in L^1_{\GH}(\hnn)$ by estimating from below the corresponding solution with the one associated to $(u_0\wedge n)\chi_{B_n(x_0)}$, $n\in \mathbb N$, which of course has an extinction time. In particular, formula \eqref{pointwise} is significant since it both proves that solutions corresponding even to compactly supported data become \it instantaneously strictly positive anywhere \rm and, besides, since it provides an explicit \it pointwise spacial lower \rm bound, in terms of the fractional Green function, for such solutions.\end{rem}

\section{Technical results} \label{technical}
\subsection{Fractional Green function estimates}
In this section we collect some estimates for fractional Green functions and potentials that have been proved in \cite{BBGM} and which will be needed in our proofs. We have already remarked that, under the \emph{Assumption \ref{general}}, the Euclidean-type bound \eqref{green-euc} holds. As a consequence, by exploiting the Bishop-Gromov Theorem (see e.g., the proof of \cite[formula (6.3)]{BBGM}), for all $1\leq q<\frac{N}{N-2s}$ direct computations in radial coordinates show that
\begin{equation}
\label{Hyp.Green.HN1}
\int_{B_R(y)} \left(\GH(x,y) \right)^q \,  \dg(x)  \le C R^{N-q(N-2s)} \qquad \text{for all $ 0 < R \le 1 $} \,,
\end{equation}
where $C=C(N,k,c,s,q)$. Under Assumption \ref{ch}, by arguing as in the proof of \cite[formula (3.7)]{BBGM}, the above estimate can be extended to all $R>0$, namely we have
\begin{equation}\label{Hyp.Green.HN3}
\int_{B_R(y)} \left(\GH(x,y) \right)^q \,  \dg(x)  \le C R^{N-q(N-2s)} \qquad \text{for all $ R>0 $} \, ,
\end{equation}
for all $1\leq q<\frac{N}{N-2s}$ and for a suitable $C=C(N,c,q)>0$.\par
In the proof of the infinite speed of propagation we will instead need the following lower bound
\begin{equation}\label{green-est-low}
\GM(x, x_0) \ge \frac{C}{r(x,x_0)^{N-2s}} \qquad \text{for all $ x \in \hn $: }  r(x,x_0) < 2
\end{equation}
for some $C=C(N,k,s)>0$, which holds under Assumption \ref{general} and has been proved in  \cite[formula (3.25)]{BBGM} by combining \eqref{fract} with suitable lower bounds of the heat kernel.
\par
We now recall from \cite[Lemma 3.2]{BBGM} a comparison result between potentials of bounded compactly supported functions with the Green function. This will be a crucial ingredient in the proof of the stability result stated in Lemma \ref{LpPhi stability} below.
 \begin{lem}\label{lem.Phi.estimates}
Let $ \hn $ satisfy Assumption \ref{general}. Let $ \psi \in \mbox{L}_c^{\infty}(\hn)$ be a nonnegative and nontrivial function such that supp$( \psi)\subseteq B_{\sigma}(x_0)$ for some $0<\sigma<1$ and $x_0 \in M$. Then there exist two constants $\underline C=\underline C(N,k,c,s)>0$ and $ \overline C= \overline C(N,k,c,s)>0$ such that
\begin{equation*}
\underline C \left\| \psi \right\|_1 \left(1\wedge r(x_0,x)^{N-2s}\right) \GH(x, x_0) \leq  (-\Delta_{\hnn})^{-s} \psi  (x) \leq \overline C\, \|\psi\|_{\infty} \, \sigma^N  \, \GH(x,x_0)  \quad \forall x\in \hnn \setminus \{ x_0 \} \, .
\end{equation*}
 \end{lem}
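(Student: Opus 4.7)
By Fubini applied to the heat kernel representation \eqref{fract}, the potential admits the expression $(-\Delta_M)^{-s}\psi(x) = \int_{B_\sigma(x_0)} \psi(y) \GH(x,y) \, \dg(y)$, so both inequalities reduce to uniform pointwise comparisons between $\GH(x,y)$ and $\GH(x,x_0)$ as $y$ ranges over $B_\sigma(x_0)$. Bishop--Gromov, valid under \eqref{ric}, provides the volume bound $\mu_M(B_\sigma(x_0)) \leq C\sigma^N$ for every $\sigma \leq 1$, which is what eventually produces the $\sigma^N$ factor in the upper bound.

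For the upper bound I would split according to whether $r(x,x_0)\geq 2\sigma$ or $r(x,x_0)<2\sigma$. In the far regime, for every $y\in B_\sigma(x_0)$ the triangle inequality forces $r(x,y)\in[\tfrac{1}{2}r(x,x_0),\tfrac{3}{2}r(x,x_0)]$; plugging this into the Gaussian bound \eqref{gaussian} and comparing with a matching Li--Yau lower bound for $k_M$ yields a pointwise comparison $\GH(x,y)\leq C\GH(x,x_0)$ uniform in $y\in B_\sigma(x_0)$, and multiplying by $\|\psi\|_\infty$ and $\mu_M(B_\sigma(x_0))$ closes the estimate. In the near regime, $r(x,x_0)<2\sigma\leq 2$, so the local lower bound \eqref{green-est-low} gives $\GH(x,x_0)\geq C r(x,x_0)^{-(N-2s)}$; using only the Euclidean-type upper bound \eqref{green-euc} together with $B_\sigma(x_0)\subset B_{3\sigma}(x)$, a polar computation against the volume comparison for small radii yields $\int_{B_\sigma(x_0)} r(x,y)^{-(N-2s)}\,\dg(y)\leq C\sigma^{2s}$, and $\sigma^{2s}\leq C\sigma^N\GH(x,x_0)$ then follows from $r(x,x_0)\lesssim \sigma$.

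For the lower bound I again split according to $r(x,x_0)\geq 1$ or $r(x,x_0)<1$. In the first case the truncation factor $1\wedge r(x,x_0)^{N-2s}$ equals $1$, and the symmetric Harnack-type argument (lower rather than upper) gives $\GH(x,y)\geq c\GH(x,x_0)$ for every $y\in B_\sigma(x_0)$, so integrating against $\psi$ produces the bound with the factor $\|\psi\|_1$. In the near case $r(x,x_0)<1$, the Euclidean upper bound \eqref{green-euc} forces $r(x,x_0)^{N-2s}\GH(x,x_0)\leq C$, so it is enough to establish $(-\Delta_M)^{-s}\psi(x)\geq c\|\psi\|_1$; since $r(x,y)\leq 1+\sigma<2$ for every $y\in B_\sigma(x_0)$, the local lower bound \eqref{green-est-low} applies throughout the support and gives immediately $\GH(x,y)\geq c/(1+\sigma)^{N-2s}\geq c/2^{N-2s}$, completing the estimate.

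The main obstacle is the two-sided Harnack-type comparison $\GH(x,y)\asymp \GH(x,x_0)$ for $y\in B_\sigma(x_0)$ in the regime where $x$ is well separated from $x_0$. The upper direction is essentially immediate from \eqref{gaussian}, but the matching pointwise lower bound on $k_M$ valid globally under the sole hypothesis \eqref{ric} is more subtle and has to be extracted from Li--Yau style estimates combined with Bishop--Gromov volume control. Moreover, the integrand in \eqref{fract} mixes short-time Euclidean behavior with long-time effects encoding the curvature bound, so care is needed to ensure that the resulting comparison constants depend only on $N,k,c,s$ and not on the specific positions of $x,x_0$ nor on $\sigma<1$.
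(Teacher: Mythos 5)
The paper does not reprove this lemma; it quotes it from \cite[Lemma 3.2]{BBGM}. Your near/far decomposition and the near-regime arguments (which use \eqref{green-euc}, \eqref{green-est-low} and Bishop--Gromov to reduce, after integrating in polar coordinates, to the lower bound $\GM(x,x_0)\gtrsim \sigma^{2s-N}$) are correct and essentially what one has to do.

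The far-regime step contains a genuine gap. You claim that the comparison $\GM(x,y)\leq C\,\GM(x,x_0)$ follows from the Gaussian upper bound \eqref{gaussian} together with ``a matching Li--Yau lower bound for $k_M$''. Under Assumption \ref{general} no such matching lower bound exists: a two-sided Gaussian estimate $k_M(t,x,y)\asymp t^{-N/2}e^{-c\,r(x,y)^2/t}$ requires \emph{nonnegative} Ricci curvature, whereas here one only assumes $\mathrm{Ric}\geq -(N-1)k$ with $k>0$. Indeed on $\mathbb{H}^N$, which satisfies all the hypotheses, $k(t,x,y)$ decays exponentially in $t$, not like $t^{-N/2}$, so the lower Gaussian bound fails for large $t$ and the claimed pointwise comparison does not follow this way. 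Your remark that ``the upper direction is essentially immediate from \eqref{gaussian}'' has the same defect: \eqref{gaussian} is only an upper bound on $k_M$, hence yields only an upper bound on $\GM(x,y)$, and without a lower bound on $\GM(x,x_0)$ for $r(x,x_0)$ large (which \eqref{green-est-low} does not supply, being restricted to $r<2$) you cannot compare the two.

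What actually works, and is what the cited BBGM proof uses, is the Li--Yau \emph{parabolic Harnack inequality} in the Ricci-bounded-below form, applied to the positive caloric function $t\mapsto k_M(t,x,\cdot)$. It compares $k_M(t,x,y)$ with $k_M(t+\tau,x,x_0)$ for a fixed time shift $\tau$ of order one and $r(y,x_0)<1$, with a multiplicative constant $\exp(C(N)\,r(y,x_0)^2/\tau + C(N)\,k\,\tau)\,(1+\tau/t)^{C(N)}$ that, for $t\geq 1$, depends only on $N,k$. Splitting the $t$-integral in \eqref{fract} at $t=1$ and handling the short-time piece separately via \eqref{gaussian} (which dominates there because $r(x,y)$ is large) gives $\GM(x,y)\leq C\,\GM(x,x_0)$, and the reverse inequality is obtained symmetrically. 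This is the missing ingredient your outline acknowledges but does not supply, and it cannot be supplied by the two-sided Gaussian route you propose.
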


\subsection{Key inequalities}
In this section we collect a series of technical inequalities that will be exploited in the proofs of the main results.
The same proofs of \cite[Lemmas 6.1 and 6.2]{BII} in the euclidean setting (and which relies on the fact that the singular kernel is nonnegative) yields the following two lemmas. We rewrite the (short) proof of the first of such results in order to stress the importance of Proposition \ref{caselli} for the present results. The proof of Lemma \ref{S-V} also depends crucially on the representation formula given in Proposition \ref{caselli} as well.
\begin{lem}[Kato's inequality]
	Let $f\in{C}^0(\RR)$ be a convex function with $f(0)\leq 0$. Then, if $v$ and $(- \Delta_{\hnn})^{s} v\in L^1_{\rm loc(\hnn)}$, Kato's inequality holds in the sense of distributions:
	\begin{equation*}
		(- \Delta_{\hnn})^{s} f(v)\leq f^\prime(v)\;(- \Delta_{\hnn})^{s} v\,.
	\end{equation*}
\end{lem}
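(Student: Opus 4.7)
The plan is to reduce Kato's inequality to a pointwise algebraic inequality under the integral sign of the singular-kernel representation provided by Proposition \ref{caselli}. The nonnegativity of the kernel $\K_s(x,y)$ is the crucial structural ingredient that underlies the whole argument.

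First I would establish the pointwise version for $v\in C_c^\infty(M)$. Both $(-\Delta_{\hnn})^s v$ and $(-\Delta_{\hnn})^s f(v)$ are defined pointwise by the formula of Proposition \ref{caselli},
\begin{equation*}
(-\Delta_{\hnn})^s w(x) = \mathrm{P.V.} \int_M [w(x) - w(y)] \K_s(x,y) \, \dg(y),
\end{equation*}
and the hypothesis $f(0)\leq 0$ ensures that $f(v)$ is nonpositive outside the support of $v$, making the tail of the integral against $\K_s(x,\cdot)$ absolutely convergent. The tangent-line inequality for a convex $f\in C^1$, namely $f(a)-f(b)\leq f'(a)(a-b)$ for all $a,b\in\R$, applied with $a=v(x)$, $b=v(y)$ and multiplied by the nonnegative kernel $\K_s(x,y)$, integrates in $y$ to
\begin{equation*}
(-\Delta_{\hnn})^s f(v)(x) \leq f'(v(x))\,(-\Delta_{\hnn})^s v(x),
\end{equation*}
where the factor $f'(v(x))$, being independent of $y$, can be pulled out of the principal value.

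Second, I would extend the inequality to the distributional setting for general $v$ with $v,\,(-\Delta_{\hnn})^s v\in L^1_{\mathrm{loc}}(M)$, by approximation through the heat-semigroup regularizations $v_\tau = e^{\tau\Delta_M}v$. Each $v_\tau$ is smooth; applying the pointwise inequality to $v_\tau$, testing against a nonnegative $\varphi\in C_c^\infty(M)$, and using self-adjointness of $(-\Delta_{\hnn})^s$ to rewrite $\int_M (-\Delta_{\hnn})^s f(v_\tau)\,\varphi\,\dg = \int_M f(v_\tau)\,(-\Delta_{\hnn})^s\varphi\,\dg$ yields
\begin{equation*}
\int_M f(v_\tau)\,(-\Delta_{\hnn})^s\varphi \, \dg \leq \int_M f'(v_\tau)\,\bigl[(-\Delta_{\hnn})^s v_\tau\bigr]\,\varphi \, \dg.
\end{equation*}
Letting $\tau\to 0^+$, the $L^1_{\mathrm{loc}}$ convergences $v_\tau\to v$ and $(-\Delta_{\hnn})^s v_\tau \to (-\Delta_{\hnn})^s v$, combined with continuity of $f$ and $f'$, deliver the distributional Kato inequality.

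The main obstacle is exactly this limiting step: controlling the product $f'(v_\tau)\,(-\Delta_{\hnn})^s v_\tau$ when $f'$ is only continuous (hence possibly unbounded) and $(-\Delta_{\hnn})^s v$ is only locally integrable. A clean workaround is to truncate $f$ to a monotone sequence of globally Lipschitz convex functions $f_n$ agreeing with $f$ on $[-n,n]$ and still satisfying $f_n(0)\leq 0$; for each $f_n$ both the pointwise and distributional steps go through by dominated convergence, and a final monotone passage in $n$ recovers the result for $f$. The role of $f(0)\leq 0$ persists throughout to guarantee summability of the tails in the singular integral representation.
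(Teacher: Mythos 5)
Your Step 1 is exactly the paper's proof: apply the tangent-line inequality $f(a)-f(b)\leq f'(a)(a-b)$ with $a=v(x)$, $b=v(y)$, multiply by the nonnegative kernel $\K_s(x,y)$ from Proposition \ref{caselli}, and integrate in $y$, pulling the $y$-independent factor $f'(v(x))$ out of the principal value. The paper stops at this formal pointwise computation (citing \cite[Lemmas~6.1--6.2]{BII} for the full result), so your Step 2 — heat-semigroup regularization, self-adjointness to move to the distributional formulation, and truncation of $f$ to globally Lipschitz convex pieces — goes beyond what the paper writes but is a sound way to make the distributional claim rigorous; the core mechanism (convexity plus nonnegativity of $\K_s$) is identical.
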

\begin{proof} Since $\K_s(x,y)\geq 0$ in $\hnn\times \hnn$ and $f$ is convex, by \eqref{Ks} we have that
	\begin{align*}
		(- \Delta_{\hnn})^{s} f(v(x))&=\int_{\hnn}\left[f(v(x))-f(v(y))\right]\K_s(x,y)\dg(y)\\
		&\leq \int_{\hnn}f^\prime(v(x))\left[v(x)-v(y)\right]\K_s(x,y)\dg(y)\\
		&=f^\prime(v(x))(- \Delta_{\hnn})^{s} f(v(x))\,.
	\end{align*}
	\end{proof}
	
\begin{lem}[Stroock-Varopoulus inequality]\label{S-V}
	For any $q>1$
	\begin{equation}\label{Stroo}
		\int_{\hnn} v^{q-1}(- \Delta_{\hnn})^{s} v \, \dg(y) \geq \frac{4(q-1)}{q^2}\int_{\hnn}\left| ((- \Delta_{\hnn})^{s})^{1/2}v^{q/2}\right|^2 \dg(y)
	\end{equation}
	for all $v\in L^q(\Omega)$ such that $((- \Delta_{\hnn})^{s})^{1/2}v\in L^q(\Omega)$.
\end{lem}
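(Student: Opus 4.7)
The plan is to follow the classical strategy for establishing a Stroock--Varopoulos inequality for an operator with a symmetric nonnegative singular kernel, the three ingredients being: the kernel representation of $(-\Delta_{\hnn})^s$ from Proposition~\ref{caselli}, the symmetrization of the resulting double integral, and a pointwise convex inequality. Concretely, assuming sufficient regularity on $v$ (to be removed by density at the end), Proposition~\ref{caselli} together with Fubini's theorem allows one to write
\begin{equation*}
\int_{\hnn} v^{q-1}\,(-\Delta_{\hnn})^s v\,\dg = \iint_{\hnn\times\hnn} v^{q-1}(x)\bigl[v(x)-v(y)\bigr]\,\K_s(x,y)\,\dg(x)\,\dg(y),
\end{equation*}
and, since $\K_s(x,y)=\K_s(y,x)\ge 0$, the standard swap-and-average trick symmetrizes this into
\begin{equation*}
\tfrac{1}{2}\iint_{\hnn\times\hnn}\bigl[v^{q-1}(x)-v^{q-1}(y)\bigr]\bigl[v(x)-v(y)\bigr]\,\K_s(x,y)\,\dg(x)\,\dg(y).
\end{equation*}

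Next, for all $a,b\ge 0$ and $q>1$ one has the elementary pointwise bound
\begin{equation*}
\bigl(a^{q-1}-b^{q-1}\bigr)(a-b)\ge \tfrac{4(q-1)}{q^2}\bigl(a^{q/2}-b^{q/2}\bigr)^2,
\end{equation*}
which, for $a\ge b$, follows at once by applying the Cauchy--Schwarz inequality to $\int_b^a r^{q/2-1}\,\rd r = \int_b^a r^{(q-2)/2}\cdot 1\,\rd r$. Inserting this estimate in the symmetrized integrand yields
\begin{equation*}
\int_{\hnn} v^{q-1}(-\Delta_{\hnn})^s v\,\dg \;\ge\; \tfrac{4(q-1)}{q^2}\cdot\tfrac{1}{2}\iint_{\hnn\times\hnn}\bigl[v^{q/2}(x)-v^{q/2}(y)\bigr]^2\K_s(x,y)\,\dg(x)\,\dg(y).
\end{equation*}
Running the symmetrization of the first step backwards on $f=v^{q/2}$ and using the spectral (Plancherel) identity, one recognizes the right-hand side as
\begin{equation*}
\tfrac{4(q-1)}{q^2}\int_{\hnn} v^{q/2}\,(-\Delta_{\hnn})^s v^{q/2}\,\dg \;=\; \tfrac{4(q-1)}{q^2}\,\bigl\|((-\Delta_{\hnn})^s)^{1/2} v^{q/2}\bigr\|_2^2,
\end{equation*}
which is precisely \eqref{Stroo}.

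The main obstacle is the rigorous justification of the above manipulations in the generality stated, namely for $v\in L^q$ with $((-\Delta_{\hnn})^s)^{1/2} v\in L^q$. The Fubini swap in the first step is delicate because of the principal value in \eqref{Ks} (genuinely needed only for $s\ge 1/2$), and the final identification of the right-hand side with a Dirichlet form requires $v^{q/2}$ to lie in the form domain of $(-\Delta_{\hnn})^s$. Both issues are handled by first proving the inequality for $v\in C_c^\infty(\hnn)$ (or for bounded, compactly supported truncations of the form $\min(v,n)\,\chi_{B_n}$), where all integrands are absolutely convergent and no P.V.~is required, and then extending to the general case by a density/Fatou argument together with the spectral calculus, exactly as in the Euclidean prototype \cite{BII} to which the statement refers.
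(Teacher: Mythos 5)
Your proof is correct and follows exactly the route the paper indicates: the paper itself does not spell out this proof but explicitly attributes it to \cite[Lemma 6.2]{BII}, noting that it ``relies on the fact that the singular kernel is nonnegative'' and ``depends crucially on the representation formula given in Proposition~\ref{caselli}''—precisely the kernel representation, symmetrization, pointwise convexity bound $(a^{q-1}-b^{q-1})(a-b)\ge \tfrac{4(q-1)}{q^2}(a^{q/2}-b^{q/2})^2$, and density argument that you carry out. Your Cauchy--Schwarz derivation of the pointwise inequality and your remarks on handling the principal value and domain issues are sound and match what a careful reader would extract from the cited Euclidean proof.
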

By exploiting Kato's inequality we obtain the two statements below.
\begin{prop}[Fundamental upper bounds]
Let $p\ge1$ and let $u$ be a nonnegative mild solution with $u_0\in L^1(\hnn)\cap L^\infty(\hnn)$, then for all $x_0\in\hnn$ and $0\leq t_0\leq \tau< t \leq t_1$ we have
\begin{equation}\label{pointwise Lp estimate}
	u^{p+m-1}(t,x_0)\leq \c_{p,m}\;\frac{t_1^{\frac{p+m-1}{1-m}}}{(t-\tau)^{\frac{p}{1-m}}}\;\int_{\hnn}u(\tau,x)^p\;\GH(x_0,x)\dg(x)\,,
	\end{equation}
where $\c_{p,m}=\frac{p+m-1}{m(1-m)}$.
\end{prop}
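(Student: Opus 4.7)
The plan is to combine Kato's inequality (stated and proved just above), the dual formulation of \eqref{FFE}, and the Benilan--Crandall time monotonicity \eqref{mon-est.OLD}, in the spirit of the Euclidean argument of \cite{BII}.

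First, I would apply Kato's inequality to $v=u^m$ with the convex function $f(\xi)=\xi^{(p+m-1)/m}$, which is indeed convex because $(p+m-1)/m\ge 1$ whenever $p\ge 1$ and $m\in(0,1)$. Since $f^\prime(u^m)=\frac{p+m-1}{m}u^{p-1}$, and since the equation and the chain rule give $(-\Delta_{\hnn})^s u^m=-\partial_t u$ and $u^{p-1}\partial_t u=\frac{1}{p}\partial_t u^p$, this produces the pointwise inequality
\begin{equation*}
(-\Delta_{\hnn})^s u^{p+m-1}\le \frac{p+m-1}{m}u^{p-1}(-\Delta_{\hnn})^s u^m=-\frac{p+m-1}{mp}\,\partial_t u^p.
\end{equation*}
Testing against the kernel $\GH(x_0,\cdot)$ of $(-\Delta_{\hnn})^{-s}$ (first on smooth approximations, using $u\in L^1\cap L^\infty$ and the local integrability of $\GH$ ensured by \eqref{Hyp.Green.HN1}, then passing to the limit) yields
\begin{equation*}
u^{p+m-1}(t,x_0)\le -\frac{p+m-1}{mp}\,\dot V_p(t),\qquad V_p(t):=\int_{\hnn}u^p(t,y)\,\GH(x_0,y)\,\dg(y),
\end{equation*}
which in particular shows that $V_p$ is nonincreasing.

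Next, I would invoke the Benilan--Crandall monotonicity \eqref{mon-est.OLD}, which in the form $u(\sigma,x_0)\ge(\sigma/t)^{1/(1-m)}u(t,x_0)$ for all $0<\sigma\le t$ yields, upon raising to the power $p+m-1>0$,
\begin{equation*}
u^{p+m-1}(\sigma,x_0)\ge \left(\sigma/t\right)^{(p+m-1)/(1-m)}u^{p+m-1}(t,x_0).
\end{equation*}
Integrating the Kato inequality on $[\tau,t]$ and inserting the above lower bound on the left-hand side leads to
\begin{equation*}
u^{p+m-1}(t,x_0)\int_\tau^t\left(\frac{\sigma}{t}\right)^{\!\frac{p+m-1}{1-m}}\rd\sigma\le \int_\tau^t u^{p+m-1}(\sigma,x_0)\,\rd\sigma\le \frac{p+m-1}{mp}\bigl(V_p(\tau)-V_p(t)\bigr)\le \frac{p+m-1}{mp}\,V_p(\tau).
\end{equation*}

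Finally, I would compute the time integral explicitly as $\frac{1-m}{p}\,t^{-(p+m-1)/(1-m)}\bigl(t^{p/(1-m)}-\tau^{p/(1-m)}\bigr)$ and estimate it from below via the elementary inequality $a^\beta-b^\beta\ge(a-b)^\beta$ (valid for $a\ge b\ge0$ and $\beta\ge1$), applied with $\beta=p/(1-m)\ge 1$. Combining with $t\le t_1$ and rearranging produces exactly \eqref{pointwise Lp estimate} with $\c_{p,m}=\frac{p+m-1}{m(1-m)}$. The main technical point to watch is the rigorous justification of testing the distributional Kato inequality against $\GH(x_0,\cdot)$; for data in $L^1\cap L^\infty$ the approximation scheme used to build mild solutions is regular enough to allow this step, after which one closes by the stability of the semigroup recalled in Section~\ref{def-wds}.
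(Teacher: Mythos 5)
Your argument is correct and follows essentially the same route as the paper's own proof: Kato's inequality applied to $v=u^m$ with $f$ proportional to $\xi^{(p+m-1)/m}$, testing against the Green kernel $\GH(x_0,\cdot)$ and using $(-\Delta_{\hnn})^{-s}(-\Delta_{\hnn})^{s}w=w$, the Bénilan--Crandall time monotonicity \eqref{mon-est.OLD} to bound the time integral from below, and finally the elementary inequality $a^\beta-b^\beta\ge(a-b)^\beta$ for $\beta=p/(1-m)\ge1$. The only (cosmetic) difference is that you first derive the pointwise-in-$x_0$ ODE $u^{p+m-1}(t,x_0)\le-\frac{p+m-1}{mp}\dot V_p(t)$ and then integrate, whereas the paper multiplies the equation by $pu^{p-1}\GH$ and integrates over $[t_0,t_1]\times\hnn$ from the start; the two are clearly equivalent. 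One small point worth making explicit: for $p=1$ the function $f$ is linear and Kato becomes an identity, and the paper prefers at this endpoint to argue directly by testing \eqref{def_eq} with a smooth approximation of $\delta_{x_0}\chi_{[t_0,t_1]}$ (following \cite[Lemma 3.4]{BII}); your scheme still closes, but flagging this case would make the write-up airtight.
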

\begin{proof} For $p>1$, we multiply the equation in \eqref{FFE} by $p u^{p-1}\,\GH$ and integrate on $[0,T]\times\hnn$. On one hand, we obtain
\begin{equation*}
  \int_{\hnn}\int_{t_0}^{t_1}\!p u^{p-1}(t,x)\;\partial_t u(t,x)\; \GH(x_0,x)\dt\dg(x)=\!\int_{\hnn}\!\!\left(u(t_1,x)^p-u(t_0,x)^p\right)\GH(x_0,x)\dg(x).
\end{equation*}
On the other hand, by Kato's inequality with $v=u^m$ and $f(v)=\tfrac{m}{p+m-1}v^{\frac{p+m-1}{m}}$, we get
\begin{align*}
&-p\int_{t_0}^{t_1}\int_{\hnn} u^{p-1} (- \Delta_{\hnn})^{s} \left(u^m(t,x)\right)\GH(x_0,x)\dg(x)\dt\\
  \le&-\frac{pm}{p+m-1}\int_{t_0}^{t_1}\int_{\hnn} (- \Delta_{\hnn})^{s} (u^{p+m-1}(t,x))\GH(x_0,x)\dg(x)\dt\\
  =&-\frac{pm}{p+m-1}\int_{t_0}^{t_1}u^{p+m-1}(t,x_0)\dt \leq -\frac{m(1-m)}{p+m-1}  \left( \frac{t_1^{\frac{p}{1-m}}-t_0^{\frac{p}{1-m}}}{t_1^{\frac{p+m-1}{1-m}}} \right) u^{p+m-1}(t_1,x_0)  \,,
\end{align*}
where in the last step we have used the time monotonicity \eqref{mon-est.OLD}, namely that $\left(\frac{{t}}{t_1}\right)^{\frac{1}{1-m}}u(t_1)\leq u({t})$. Combining the above estimates and taking $t_1=t$, $t_0=\tau$, we get
$$u^{p+m-1}(t,x_0)\leq \c_{p,m}\;\frac{t^{\frac{p+m-1}{1-m}}}{(t_1)^{\frac{p}{1-m}}-(\tau)^{\frac{p}{1-m}}}\;\int_{\hnn}u(\tau,x)^p\;\GH(x_0,x)\dg(x)$$
 and we obtain \eqref{pointwise Lp estimate} by noticing that $t^{\frac{p+m-1}{1-m}}\leq t_1^{\frac{p+m-1}{1-m}}$ and $(t_1)^{\frac{p}{1-m}}-(\tau)^{\frac{p}{1-m}}\geq (t-\tau)^{\frac{p}{1-m}}$ for $0\leq t_0\leq \tau< t \leq t_1$. The case $p=1$ follows similarly but by testing \eqref{def_eq} with a smooth approximation of $\delta_{x_0}\chi_{[t_0, t_1]}$, see \cite[Lemma 3.4]{BII}.
\hfill
\end{proof}

\begin{lem}[$L^p_{x_0,\GH}$-stability]\label{LpPhi stability}
	Let $p\ge1$ and let $u$ be a nonnegative mild solution with $u_0\in L^1(\hnn)\cap L^\infty(\hnn)$, then
	\begin{align}\label{stab_ineq}
	\|u(t)\|_{L^p_{x_0,\GH}}\leq\|u_0\|_{L^p_{x_0,\GH}}\qquad \text{for all } t\geq 0 \text{ and all }x_0\in \hnn\,\,.
	\end{align}
\end{lem}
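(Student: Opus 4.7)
The plan is to introduce a nonnegative, nontrivial test function $\psi_{x_0}\in L^\infty_c(\hnn)$ with $\mathrm{supp}\,\psi_{x_0}\subseteq B_{\sigma}(x_0)$ for some $\sigma\in(0,1)$, and to monitor the weighted quantity $\mathcal{E}(t):=\int_{\hnn}u^p(t,x)\,\varphi_{x_0}(x)\,\dg(x)$ where $\varphi_{x_0}:=(-\Delta_{\hnn})^{-s}\psi_{x_0}$. This choice is convenient because $\varphi_{x_0}$ plays well with the equation through duality (its fractional Laplacian is just $\psi_{x_0}$), while being pointwise comparable to the weight $\chi_{B_1(x_0)}+\GHH(\cdot,x_0)\chi_{\hnn\setminus B_1(x_0)}$ that defines $\|\cdot\|_{L^p_{x_0,\GHH}}$. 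The comparability is a straightforward consequence of Lemma \ref{lem.Phi.estimates} combined with the near-diagonal lower bound \eqref{green-est-low}: the factor $(1\wedge r(x_0,x)^{N-2s})\GHH(x,x_0)$ is bounded below by a positive constant on $B_1(x_0)$ and equals $\GHH(x,x_0)$ on $\hnn\setminus B_1(x_0)$, yielding the lower comparison for $\varphi_{x_0}$; the matching upper bound follows from the direct estimate $\varphi_{x_0}(x)\le\|\psi_{x_0}\|_\infty\int_{B_\sigma(x_0)}\GHH(x,y)\,\dg(y)$ on $B_1(x_0)$ (uniformly bounded by \eqref{Hyp.Green.HN1} with $q=1$) together with the bound $\varphi_{x_0}\le \overline{C}\,\GHH(\cdot,x_0)$ from Lemma \ref{lem.Phi.estimates} outside $B_1(x_0)$.

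To prove that $\mathcal{E}$ is nonincreasing I would differentiate in time and apply Kato's inequality with $v=u^m$ and $f(v)=\tfrac{m}{p+m-1}v^{(p+m-1)/m}$ (convex because $p\ge 1$, with $f(0)=0$), obtaining the pointwise estimate $u^{p-1}(-\Delta_{\hnn})^s u^m \ge \tfrac{m}{p+m-1}(-\Delta_{\hnn})^s u^{p+m-1}$. The self-adjointness of $(-\Delta_{\hnn})^s$ together with the identity $\varphi_{x_0}=(-\Delta_{\hnn})^{-s}\psi_{x_0}$ then yields
\begin{equation*}
\tfrac{d}{dt}\mathcal{E}(t)\le -\tfrac{pm}{p+m-1}\int_{\hnn}(-\Delta_{\hnn})^s(u^{p+m-1})\,\varphi_{x_0}\,\dg=-\tfrac{pm}{p+m-1}\int_{\hnn}u^{p+m-1}\,\psi_{x_0}\,\dg\le 0,
\end{equation*}
so that $\mathcal{E}(t)\le\mathcal{E}(0)$ for every $t\ge 0$; combined with the comparability step above, this already delivers the stability \eqref{stab_ineq} up to a multiplicative constant.

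The main obstacle is sharpening the constants to the exact value $1$ stated in the Lemma, since the scheme above only produces $\|u(t)\|_{L^p_{x_0,\GHH}}\le(c_2/c_1)^{1/p}\|u_0\|_{L^p_{x_0,\GHH}}$. I would try to close this gap by realizing the actual weight $\chi_{B_1(x_0)}+\GHH(\cdot,x_0)\chi_{\hnn\setminus B_1(x_0)}$ as a monotone increasing limit of potentials $\varphi_{x_0}^{(n)}=(-\Delta_{\hnn})^{-s}\psi_{x_0}^{(n)}$, applying the dissipation to each $\varphi_{x_0}^{(n)}$ and passing to the limit via monotone convergence on both sides. A secondary, routine technicality is the rigorous justification of Kato's inequality and the integration by parts when $u$ is only a mild solution, handled in the usual way by replacing $u^m$ with $(u+\varepsilon)^m-\varepsilon^m$ and letting $\varepsilon\to 0^+$, exploiting the $L^1\cap L^\infty$ regularity of mild solutions provided by Theorem \ref{thm-existence} together with the time-monotonicity \eqref{mon-est.OLD}.
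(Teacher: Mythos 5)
Your approach is essentially the same as the paper's. You test the equation against $pu^{p-1}(-\Delta_{\hnn})^{-s}\psi$ for a fixed nonnegative $\psi\in L^\infty_c(\hnn)$ supported near $x_0$, apply Kato's inequality with the same choice $f(v)=\tfrac{m}{p+m-1}v^{(p+m-1)/m}$ to get the monotonicity $\int u^p(t)\,(-\Delta_{\hnn})^{-s}\psi\,\dg\le\int u_0^p\,(-\Delta_{\hnn})^{-s}\psi\,\dg$, and then invoke the two-sided comparability of $(-\Delta_{\hnn})^{-s}\psi$ with the weight defining $\|\cdot\|_{L^p_{x_0,\GHH}}$ coming from Lemma~\ref{lem.Phi.estimates} (with constants independent of $x_0$). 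The only cosmetic difference is that you write the dissipation differentially ($\tfrac{d}{dt}\mathcal{E}\le 0$) whereas the paper integrates over $[t_0,t_1]$; both amount to the same thing.

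Your concern about the multiplicative constant is a fair reading: the paper's argument, as written, also yields $\|u(t)\|_{L^p_{x_0,\GHH}}^p\le (c_2/c_1)\,\|u_0\|_{L^p_{x_0,\GHH}}^p$ rather than literally constant $1$, and the paper does not address this discrepancy explicitly (it refers to \cite[Lemma 3.3]{BBGM} for the $p=1$ case). In all applications in this paper (Step 3 of the proof of Theorem~\ref{Thm.Smoothing} and the proof of Theorem~\ref{Thm.SmoothingPhi}) the stability is used inside estimates that already carry unspecified universal constants, so the constant is harmless there.

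However, your proposed remedy — realizing the weight $w_{x_0}:=\chi_{B_1(x_0)}+\GHH(\cdot,x_0)\chi_{\hnn\setminus B_1(x_0)}$ as a monotone increasing limit of potentials $(-\Delta_{\hnn})^{-s}\psi^{(n)}_{x_0}$ with $\psi^{(n)}_{x_0}\ge 0$ in $L^\infty_c$ — is not going to work as stated. Potentials of nonnegative compactly supported data are $s$-superharmonic, and a monotone increasing pointwise limit of such functions is again (distributionally) $s$-superharmonic; but the weight $w_{x_0}$ is not $s$-superharmonic (near $\partial B_1(x_0)$ it has a jump, and near $x_0$ it truncates the Green function's singularity), so it cannot be such a limit. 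If you want to remove the constant you would need a genuinely different argument; as written, the robust and correct conclusion is stability \emph{up to a universal multiplicative constant} depending only on $N,s,k,c$ (independent of $x_0$ and of $t$), which is what the argument actually proves and all that the subsequent results require.
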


\begin{proof}
	For $0 <t_0<t_1 $, let us multiply the equation in \eqref{FFE} pointwise by $pu^{p-1} (- \Delta_{\hn})^{-s} \psi(x) \chi_{[t_0, t_1]}(t)$ for some $\psi \in L^{\infty}_{c}(\hnn)$ (this can be replaced by a smooth approximation, see \cite[Lemma 6.4]{BII}) and integrate:
	\begin{align*}
	&\int_{t_0}^{t_1}\int_{\hnn}pu^{p-1}(- \Delta_{\hnn})^{s} u^m\; (- \Delta_{\hn})^{-s} \psi(x) \dg(x)\dt \\
	&=\int_{t_0}^{t_1}\int_{\hnn}pu^{p-1}\partial_t u\,(- \Delta_{\hn})^{-s} \psi(x) \dg(x)\dt\\
	&=\int_{0}^T\int_{\hnn}\partial_t\left(u^p\,(- \Delta_{\hn})^{-s} \psi(x) \right)\chi_{[t_0, t_1]}\dg(x)\dt\\&=\int_{\hnn}u^p(t_1)(- \Delta_{\hn})^{-s} \psi(x)\dg(x)-\int_{\hnn}u^p(t_0)(- \Delta_{\hn})^{-s} \psi(x)\dg(x)\,.
	\end{align*}
On the other hand, by Kato's inequality (with $v=u^m$ and $f(v)=\tfrac{m}{p+m-1}v^{\frac{p+m-1}{m}}$) we get
	\begin{align*}
	&-\int_{t_0}^{t_1}\int_{\hnn}pu^{p-1}(- \Delta_{\hnn})^{s} u^m\; (- \Delta_{\hn})^{-s} \psi(x) \dg(x)\dt \\&\leq-\frac{p m}{p+m-1}\int_{t_0}^{t_1}\int_{\hnn}(- \Delta_{\hnn})^{s} u^{p+m-1}(- \Delta_{\hn})^{-s} \psi(x) \dg(x)\dt\\
	&=-\frac{p m}{p+m-1}\int_{t_0}^{t_1} \int_{\hnn} u^{p+m-1} \psi(x) \dg(x) \dt\leq 0\,.
	\end{align*}	
	Whence, the following inequality holds:
\begin{align}\label{limit-1}
\int_{\hnn}u^p(t_1)(- \Delta_{\hn})^{-s} \psi(x)\dg(x)\leq \int_{\hnn}u^p(t_0)(- \Delta_{\hn})^{-s} \psi(x)\dg(x) \,,
\end{align}
for every nonnegative $\psi \in L^{\infty}_{c}(\hnn)$, and all $ 0 <t_0<t_1 $.  Then \eqref{stab_ineq} follows from \eqref{limit-1} by noticing that, in view of Lemma \ref{lem.Phi.estimates}, there exist a nonnegative and nontrivial function $ \psi \in \mbox{L}_c^{\infty}(\hn)$ and two constants $c_1, c_2>0$, depending only on $ N,s,k,c  $ (in particular independent of $ x_0 $), such that for all nonnegative $u\in L^p_{\GM}(\hnn)$ one has
\begin{equation*}
c_1 \, \int_M u^p \, (- \Delta_{\hn})^{-s} \psi \,  {\rm d}\mu_M\le\left\| u\right\|_{L^p_{x_0,\GM}}^p \le c_2 \, \int_M u^p \, (- \Delta_{\hn})^{-s} \psi \,  {\rm d}\mu_M \, .
\end{equation*}
See \cite[Lemma 3.3]{BBGM} for a detailed proof in the case $p=1$.

\end{proof}

\section{Proof of Theorem \ref{Thm.Smoothing}}\label{proof1}

We first give the proof for $ \hn $ satisfying Assumptions \ref{general} and \ref{ch}.

\noindent It is enough to prove this theorem for bounded nonnegative mild solutions $u \in L^1(\hnn)\cap L^\infty(\hnn)$. Indeed, we can approximate the WDS with initial datum $u_0\in  L^1_{\GH}(\hnn) \cap L^p(\hnn)$ by means of mild solutions  $u_n(t)$ starting at $u_{0,n}=\min\lbrace u_0,n\rbrace  \chi_{B_n(o)} \in L^1(\hnn)\cap L^\infty(\hnn)$, see Step 3 for further details.

\noindent$\bullet~$\textsc{Step 1. }\textit{Fundamental pointwise estimate and De Giorgi Lemma.} Let $u$ be a nonnegative  mild solution with $u_0\in L^1(\hnn)\cap L^\infty(\hnn)$ and let $\varepsilon\in\left(0,\tfrac{2sp-N(1-m)}{N}\right)\subset(0,p+m-1)$. Then, for all $0\le t_0 < t_1$, the following estimate holds:
\begin{equation}\label{Green.1.0}\begin{split}
&\|u(t_1)\|_\infty^{p+m-1}
 \leq \\
 &\;2^{\frac{p+m-1-\varepsilon}{\varepsilon}} c\left(\frac{p(p+m-1)}{(1-m)\varepsilon}, \lambda \right) \left[\c_{p,m}\frac{t_1^{\frac{p+m-1}{1-m}}}{(t-t_0)^{\frac{p}{1-m}}} \sup\limits_{\substack{\tau\in[t_0,t_1]\\x_0\in\hnn}} \int_{B_R(x_0)}\!\!\!\!\!\!\!\!u^{1-m+\varepsilon}(\tau,x)\,\G(x_0,x)\dg(x)\right]^{\frac{p+m-1}{\varepsilon}} \\
 & + c\left(\frac{p}{1-m}, \lambda \right) \c_{p,m}\;\frac{t_1^{\frac{p+m-1}{1-m}}}{(t-t_0)^{\frac{p}{1-m}}}\; \sup\limits_{\substack{\tau\in[t_0,t_1]\\x_0\in\hnn}}\int_{\hnn\setminus B_R(x_0)}u^p(\tau,x)\;\G(x_0,x)\dg(x)\,,
\end{split}\end{equation}
where
	\begin{equation*}
		c(\alpha, \lambda)=\frac{1}{\left(1-\lambda\right)^{\alpha}\left(1-\frac{1}{2\lambda^{\alpha}}\right)}\qquad\mbox{for any}\qquad\lambda\in(2^{-\frac{1}{\alpha}}, 1)\,.
	\end{equation*}

In order to prove \eqref{Green.1.0}, we apply the upper bound \eqref{pointwise Lp estimate} which gives that for all $x_0\in\hnn$ and $0\leq t_0\leq \tau< t \leq t_1$,	
\begin{equation*}
u^{p+m-1}(t,x_0)\leq \c_{p,m}\;\frac{t_1^{\frac{p+m-1}{1-m}}}{(t-\tau)^{\frac{p}{1-m}}}\;\int_{\hnn}u^p(\tau,x)\;\G(x_0,x)\dg(x)\;.
\end{equation*}
Then we split the last integral in two parts: fix $R>0$ to be determined later and let $\varepsilon\in(0,p+m-1)$
\begin{equation*}\begin{split}
&u^{p+m-1}(t,x_0)
\leq \c_{p,m}\;\frac{t_1^{\frac{p+m-1}{1-m}}}{(t-\tau)^{\frac{p}{1-m}}}\;
\|u(\tau)\|_{\infty}^{p+m-1-\varepsilon}\int_{B_R(x_0)}u^{1-m+\varepsilon}(\tau,x)\;\G(x_0,x)\dg(x)\\
& + \c_{p,m}\;\frac{t_1^{\frac{p+m-1}{1-m}}}{(t-\tau)^{\frac{p}{1-m}}}\;\int_{\hnn\setminus B_R(x_0)}u^p(\tau,x)\;\G(x_0,x)\dg(x)\\
\leq&\frac{1}{2}\|u(\tau)\|_\infty^{p+m-1}\!\!+2^{\frac{p+m-1-\varepsilon}{\varepsilon}}\!\!
\left[\!\c_{p,m}\!\frac{t_1^{\frac{p+m-1}{1-m}}}{(t-\tau)^{\frac{p}{1-m}}}\!
\int_{B_R(x_0)}\!\!\!\!\!u^{1-m+\varepsilon}(\tau,x)\,\G(x_0,x)\dg(x)\right]^{\!\!\frac{p+m-1}{\varepsilon}}\\
& + \c_{p,m}\;\frac{t_1^{\frac{p+m-1}{1-m}}}{(t-\tau)^{\frac{p}{1-m}}}\;\int_{\hnn\setminus B_R(x_0)}u^p(\tau,x)\;\G(x_0,x)\dg(x)\\
\end{split}\end{equation*}
by using Young's inequality, $ab\leq\frac{1}{2}a^{\sigma}+2^{\frac{1}{\sigma-1}}\;b^{\frac{\sigma}{\sigma-1}}$, with $\sigma=\tfrac{p+m-1}{p+m-1-\varepsilon}>1$.

\noindent Taking supremum w.r.t. $x_0\in\hnn$ on both sides we obtain for all  $0\leq t_0\leq \tau< t \leq t_1$,	
\begin{equation*}\begin{split}
\|u(t)\|_\infty^{p+m-1}
\leq&\frac{1}{2}\|u(\tau)\|_\infty^{p+m-1}\\
& + 2^{\frac{p+m-1-\varepsilon}{\varepsilon}}
\left[\c_{p,m}\frac{t_1^{\frac{p+m-1}{1-m}}}{(t-\tau)^{\frac{p}{1-m}}}
\sup\limits_{\substack{\tau\in[t_0,t_1]\\x_0\in\hnn}} \int_{B_R(x_0)}u^{1-m+\varepsilon}(\tau,x)\,\G(x_0,x)\dg(x)\right]^{\frac{p+m-1}{\varepsilon}}\\
& + \c_{p,m}\;\frac{t_1^{\frac{p+m-1}{1-m}}}{(t-\tau)^{\frac{p}{1-m}}}\;\sup\limits_{\substack{\tau\in[t_0,t_1]\\x_0\in\hnn}}\int_{\hnn\setminus B_R(x_0)}u^p(\tau,x)\;\G(x_0,x)\dg(x)
\end{split}\end{equation*}
Finally, we can apply De Giorgi's Lemma, see \cite[Lemma 6.3]{BII}, with the function $Z(t):=\|u(t)\|^{p+m-1}_\infty$ and we obtain \eqref{Green.1.0}.

\noindent$\bullet~$\textsc{Step 2. } \textit{Proof of \eqref{Thm.Smoothing.ineq}  when $u_0\in L^1(\hnn)\cap L^{\infty}(\hnn)$}. \nero

The proof consists in estimating the two terms on the right-hand side of \eqref{Green.1.0}.

Fix $\varepsilon\in\left(0,\tfrac{2sp-N(1-m)}{N}\right)\subset(0,p+m-1)$ and
\[
1< \frac{p}{p-1+m-\varepsilon}=q<\frac{N}{N-2s} \quad \text{and}\quad 1<\frac{N}{2s}< q'= \frac{q}{q-1}=\frac{p}{1-m+\varepsilon}\,,
\]
where the above inequalities hold for every $p\ge 1$ if $m\in(m_c,1)$ and for $p> p_c$ if $m\in(0,m_c]$.
Then, by Hölder's inequality, the estimate \eqref{Hyp.Green.HN3} and the $L^p$-norm decay \eqref{p-decay}, we have
\begin{equation*}\begin{split}
\int_{B_R(x_0)}u^{1-m+\varepsilon}(\tau,x)\,\G(x_0,x)\dg(x)
&\le \|u(\tau)\|_{q'(1-m+\varepsilon)}^{1-m+\varepsilon} \left[\int_{B_R(x_0)}\left(\G(x_0,x)\right)^{q}\dg(x)\right]^{\frac{1}{q}}\\
&\le C \|u(t_0)\|_{p}^{1-m+\varepsilon} \,R^{\frac{N-q(N-2s)}{q}}
=C \|u(t_0)\|_{p}^{1-m+\varepsilon} \;{R^{2s-\frac{N}{q'}}}
\end{split}\end{equation*}
where   $C>0$ depends on $q,N,s$.

Next we estimate the second term on the right-hand side of \eqref{Green.1.0}. By \eqref{green-euc} and using the $L^p$-norm decay we have
\begin{equation*}\begin{split}
\int_{\hnn \setminus B_R(x_0)}u^p(\tau,x)&\;\G(x_0,x)\dg(x)
\le  C \int_{\hnn\setminus B_R(x_0)}\frac{u^p(\tau,x)}{r(x, x_0)^{N-2s}}\dg(x)\le C\frac{\|u(t_0)\|_p^p}{R^{N-2s}}\,.
\end{split}\end{equation*}

\noindent Plugging the above estimates in \eqref{Green.1.0} with $t_0=0$ and $t_1=t$, we obtain
\begin{equation}\label{R1}\begin{split}
\|u(t)\|_\infty^{p+m-1}
&\leq 2^{\frac{p+m-1-\varepsilon}{\varepsilon}}
c_1 \left[\frac{\|u_0\|_{p}^{1-m+\varepsilon}}{t} R^{\frac{2sp-N(1-m+\varepsilon)}{p}}\right]^{\frac{p+m-1}{\varepsilon}}\!\!\!\!
+ c_2\;\frac{\|u_0\|^p_{p}}{t}\;
\frac{ 1}{R^{N-2s}}\,,
\end{split}\end{equation}
where $c_1,c_2>0$ depend on $q,N,s$ ($c_1$ depends also on $\varepsilon$).
Finally, we choose
\[
R=\left(\frac{t}{\|u_0\|_{p}^{1-m}}\right)^{\frac{p}{2sp-N(1-m)}}
\]
and we obtain the desired smoothing effects \eqref{Thm.Smoothing.ineq} for every $p\ge 1$ if $m\in(m_c,1)$ and $p> p_c$ if $m\in(0,m_c]$.

\noindent$\bullet~$\textsc{Step 3. } \textit{Proof of \eqref{Thm.Smoothing.ineq} for WDS with  $u_0\in  L^1_{\GH}(\hnn) \cap L^p(\hnn)$ with with $p_c<p < +\infty$ if $m\in(0,m_c]$ or $1 \leq p < +\infty$ if $m\in(m_c,1)$.} We approximate the initial datum $0\leq u_0\in L^p(\hnn)$  by a monotone sequence of truncates $u_{0,n}=\min\lbrace u_0,n\rbrace  \chi_{B_n(o)} \in L^1(\hnn)\cap L^\infty(\hnn)$ so that $u_{0,n}\to u_0$ in $L^p$. Since $u_{0,n}\in L^1(\hnn)\cap L^{\infty}(\hnn) $, there exists a sequence of mild solutions $u_n(t)\in L^1(\hnn)\cap L^{\infty}(\hnn)$, which are WDS and satisfy the smoothing estimates \eqref{Thm.Smoothing.ineq} for every $p\ge 1$ if $m\in(m_c,1)$ and $p> p_c$ if $m\in(0,m_c]$. As a consequence, by lower semicontinuity of the $L^\infty$ norm we obtain
\begin{align*}
\|u(t)\|_\infty&\leq\lim\limits_{n\rightarrow\infty}\|u_n(t)\|_\infty\leq\lim\limits_{n\rightarrow\infty} \ka t^{-N\vartheta_{p}}\|u_{0,n}\|_p^{2sp\vartheta_{p}}=\ka t^{-N\vartheta_{p}}\|u_0\|_p^{2sp\vartheta_{p}}\,.
\end{align*}
\par

This concludes the proof of the smoothing effects of Theorem \ref{Thm.Smoothing} if $ \hn $ satisfies both Assumptions \ref{general} and \ref{ch}. When only Assumptions \ref{general} is satisfied, then Step 1 and Step 3 hold with basically no changes while, since we only have \eqref{Hyp.Green.HN1}, Step 2 holds provided that $R\leq 1$. This gives the proof of \eqref{Thm.Smoothing.ineq} for $0<t\leq \|u_0\|^{1-m}_{p} $. To prove \eqref{Thm.Smoothing.ineqweight0}, we fix $R=1$ in \eqref{R1} and we get

\begin{equation*}\begin{split}
\|u(t)\|_\infty^{p+m-1} \leq \frac{C}{t}  \| u_0 \|_{p}^p \left[  \left( \frac{\left\| u_0 \right\|_{p}^{(1-m)}}{t}\right)^{\frac{p+m-1-\varepsilon}{\varepsilon}} +1 \right]
 \end{split}
\end{equation*}
for some $C>0$ depending on $q,N,s$ and $\varepsilon\in\left(0,\tfrac{2sp-N(1-m)}{N}\right)\subset(0,p+m-1)$. Then \eqref{Thm.Smoothing.ineqweight0} follows by taking
$ \frac{\left\| u_0 \right\|_{p}^{1-m}}{t} \leq 1$. $\Box$

	\section{Proof of Theorem \ref{ext}} \label{proof2}

Let $ \hn $ satisfy Assumption \ref{general}. By exploiting the Stroock-Varopoulos inequality \eqref{Stroo} with $q=\frac{p-1+m}{m}>1$ (yielding $p>1$) and $v=u^m$, we get
\begin{align}\label{der.Lp.norm}
	\frac{\rd}{\dt}\int_{\hnn}u^p  \dg(x) &=-p\int_{\hnn}u^{p-1}(- \Delta_{\hnn})^{s} u^m  \dg(x) \leq -c_{m,p}\|(- \Delta_{\hnn})^{\frac{s}{2}} u^{\frac{p+m-1}{2}}\|_2^2
\end{align}
where $c_{m,p}:= \frac{4 m p (p-1)}{(p-1+m)^2}$. By taking $p=p_c$ in the above and using Sobolev inequality \eqref{Sobolev-frac} we deduce that
\begin{align}\label{ode}
	\frac{\rd}{\dt} \|u\|_{p_c}^{p_c} \leq -c_{m,p}\;C^{-1}\|u\|_{p_c}^{p_c+m-1}
\end{align}
and, in turn, that $\frac{\rd}{\dt} \|u\|_{p_c}^{1-m} \leq -c_{p}:=- c_{m,p}\;C^{-1}\, \frac{2s}{N}$. The latter inequality, integrated in the interval $[t_0,t_1]$, yields
$$
\|u(t_1)\|_p^{1-m} - \|u(t_0)\|_p^{1-m} \leq c_p (t_1-t_0)
$$
which both gives the existence of an extinction time $T=T(u_0)$ (by taking $t_0=0$ and $t_1=t$) and \eqref{Lp estimate} for $p=p_c$ (recall that $p_c>1$ only if $0<m<m_c$).

 Let now $ \hn $ also satisfy Assumption  \ref{neg}. If $p> p_c$, then $2 < \frac{2p}{p+m-1}< \frac{2N}{N-2s}$, whence, interpolating and exploiting Sobolev inequality \eqref{Sobolev-frac} and Poincar\'e inequality \eqref{poi}, we get $$\|w\|_{\frac{2p}{p+m-1}}\leq C \|(- \Delta_{\hnn})^{\frac{s}{2}} w\|_2 \qquad \text{for all } w\in H^s(\hnn) $$
for some $C>0$ only depending on $N,s$. Then, inserting this into \eqref{der.Lp.norm} with $w=u^{\frac{p+m-1}{2}}$ we get an inequality of the form \eqref{ode} and integrating on the interval $[0,t]$ we obtain \eqref{Lp estimate} for $p>p_c$. $\Box$	  		

\section{Proof of Theorem \ref{Thm.SmoothingPhi}} \label{proof3}
The proof follows by estimating the two terms on the right-hand side of \eqref{Green.1.0} for small and large times.  As explained in the proof of Theorem \ref{Thm.Smoothing}, it is enough to prove the estimate for bounded nonnegative mild solutions in $ L^1(\hnn)\cap L^{\infty}(\hnn)$.

\noindent$\bullet~$\textsc{proof of \eqref{Thm.Smoothing.ineqweight}.} Let $R\leq 1$. Arguing as in the proof of Theorem \ref{Thm.Smoothing}, by recalling the definition \eqref{normLGx0} of the norm $L^p_{x_0,\GH}$ and exploiting the $L^p_{x_0,\GH}$ stability given by Lemma \ref{LpPhi stability}, we estimate the first term of \eqref{Green.1.0} as follows:
\begin{equation} \label{first}
\begin{split}
\int_{B_R(x_0)}u^{1-m+\varepsilon}(\tau,x)\,\G(x_0,x)\dg(x)& \le C \|u(\tau)\|_{L^p(B_R(x_0))}^{1-m+\varepsilon} \,R^{\frac{N-q(N-2s)}{q}}\\
& \le C \left\| u_0 \right\|_{L^p_{x_0,\GH}}^{1-m+\varepsilon} \,R^{\frac{N-q(N-2s)}{q}}
\end{split}\end{equation}
where   $C>0$ depends on $q,N,s$ and $\varepsilon\in\left(0,\tfrac{2sp-N(1-m)}{N}\right)\subset(0,p+m-1)$.

On the other hand, by exploiting \eqref{green-euc}, we get the estimate of the second term on the right-hand side of \eqref{Green.1.0}:
\begin{equation*}
\begin{split}
&\int_{\hnn \setminus B_R(x_0)}u^p(\tau,x)\;\G(x_0,x)\dg(x)\\
&= \int_{B_1(x_0) \setminus B_R(x_0)}u^p(\tau,x)\;\G(x_0,x)\dg(x)+\int_{\hnn \setminus B_1(x_0)}u^p(\tau,x)\;\G(x_0,x)\dg(x)\\
&\le  \frac{C}{R^{N-2s}}\int_{B_1(x_0)}u^p(\tau,x)\dg(x)+ \frac{1}{R^{N-2s}} \int_{\hnn \setminus B_1(x_0)}u^p(\tau,x)\;\G(x_0,x)\dg(x)\\
&\le  \frac{C}{R^{N-2s}}  \| u_0 \|_{L^p_{x_0,\GH}}^{p} \,.
\end{split}\end{equation*}
Then, by the same argument of the proof of \eqref{Thm.Smoothing.ineq} and taking the supremum w.r.t. $x_0\in \hnn$, we get:
		$$
		  \|u(t)\|_\infty \leq \ka\;\frac{\|u_0\|_{L^p_{\GH}}^{2sp\,\vartheta_{p} }}{t^{N\vartheta_{p}}}\,,	
		 $$
 provided that
\[
R=\left(\frac{t}{\|u_0\|_{L^p_{\GH}}^{1-m}}\right)^{\frac{p}{2sp-N(1-m)}} \leq 1\,,
\]
namely for $t\leq \|u_0\|^{1-m}_{L^p_{\GH}} $, whence \eqref{Thm.Smoothing.ineqweight} follows.


\par

\noindent$\bullet~$\textsc{proof of \eqref{Thm.Smoothing.ineqweight2}.}
If $R=1$ formula \eqref{Green.1.0} combined with \eqref{first} yields
\begin{equation*}
\begin{split}
\|u(t)\|_\infty^{p+m-1} & \leq C \frac{1}{t^{\frac{p+m-1}{\varepsilon}}} \left\| u_0 \right\|_{L^p_{x_0,\GH}}^{(1-m+\varepsilon) {\frac{p+m-1}{\varepsilon}}} +C \frac{1}{t}\left\| u_0 \right\|_{L^p_{x_0,\GH}}^p
\\
& \leq \frac{C}{t}  \| u_0 \|_{L^p_{\GH}}^p \left[  \left( \frac{\left\| u_0 \right\|_{L^p_{\GH}}^{(1-m)}}{t}\right)^{\frac{p+m-1-\varepsilon}{\varepsilon}} +1 \right]
 \end{split}
\end{equation*}
where   $C>0$ depend on $q,N,s$ and $\varepsilon\in\left(0,\tfrac{2sp-N(1-m)}{N}\right)\subset(0,p+m-1)$. Then the thesis follows by taking
$ \frac{\left\| u_0 \right\|_{L^p_{\GH}}^{1-m}}{t} \leq 1\,.$   $ \Box$

\section{Proof of Theorem \ref{lowerth}}\label{proof4}

By testing \eqref{def_eq} with a smooth approximation of $\delta_{x_0}\chi_{[t_0, t_1]}$ and arguing as in the proof of \cite[Lemma 3.4]{BII}, for all $x \in M$ and $0\leq t_0\le t_1$ one has
	$$
	\frac{u^m(t_0,x)}{t_0^{\frac{m}{1-m}}} \geq  \frac{1}{1-m}\int_M  \frac{u(t_0,y)-u(t_1,y)}{ t_1^{\frac{1}{1-m}}-t_0^{\frac{1}{1-m}} }\, \GH(x,y)\dg \,.
	$$
Then, taking $t_0=t$ and $t_1=T$ in the above and recalling \eqref{green-est-low}, we get
\begin{equation}\begin{split}\label{pointwise Lp estimate_low}
	u^m(t,x) &\geq  \frac{t^{\frac{m}{1-m}}}{(1-m)\left(T^{\frac{1}{1-m}}-t^{\frac{1}{1-m}} \right)}\int_M u(t,y)\, \GH(x,y)\dg(y) \\ &
	\geq C  \frac{t^{\frac{m}{1-m}}}{(1-m)\left(T^{\frac{1}{1-m}}-t^{\frac{1}{1-m}} \right)} \,\left( \int_{B_1(x)} u (t,y)  \dg(y)\right.\\ &\left.+ \int_{\hn \setminus B_1(x)}  u(t,y)  \,\GM(x,y) \, \dg(y) \right) \\ &= C  \frac{t^{\frac{m}{1-m}}}{(1-m)\left(T^{\frac{1}{1-m}}-t^{\frac{1}{1-m}} \right)} \, \left\| u (t) \right\|_{L^1_{x,\GM}}  \,,
	\end{split}
\end{equation}
	for some $C=C(N,k,c,s)$. This completes the proof of \eqref{pointwise0}. \par
	
	In order to prove \eqref{pointwise}, we refine our estimate of the r.h.s. of the first line of formula \eqref{pointwise Lp estimate_low}. Recalling \eqref{FFE}, we formally compute
	$$\frac{d}{dt} \int_M u(t,y)\, \GH(x,y)\dg=- \int_M (- \Delta_{\hn})^{s} \!\left(u^m\right) \, \GH(x,y) \dg=-u^m(t,x)\geq -t^{-\alpha}M_0^m\,,$$
	where the latter inequality comes from \eqref{Thm.Smoothing.ineq} and $\alpha= \frac{Nm}{2sp-N(1-m)}\in (0,1)$ for $p>\frac{N}{2s}$, while $M_0^m=\ka^m\,\|u_0\|^{2sp\vartheta_{p}m}_p$. Then, integrating between $0$ and $t$, we get that
	$$\int_M u(t,y)\, \GH(x,y)\dg- \int_M u_0(y)\, \GH(x,y)\dg\geq -\frac{M_0^m}{1-\alpha} t^{1-\alpha}$$
	and, in turn, that
	$$\int_M u(t,y)\, \GH(x,y)\dg\geq \frac{1}{2} \int_M u_0(y)\, \GH(x,y)\dg \qquad \text{for } t\in[0,t_0(u_0)]$$
	for some $0<t_0(u_0)\leq T(u_0)$ sufficiently small. Regarding the above estimates, the finiteness of the last term can be checked by applying the H\"older inequality and exploiting the estimates \eqref{green-euc} and \eqref{Hyp.Green.HN1} as follows
   \begin{align*}
\int_M u_0(y)\, \GH(x,y) \dg&=\int_{B_1(o)} u_0(y)\, \GH(x,y) \dg+ \int_{M\setminus B_1(o)} u_0(y)\, \GH(x,y) \dg\\
&\leq \|u_0\|_{p}  \|\GH(x,y) \|_{L^{\frac{p}{p-1}}(B_1(o))}+C \|u_0\|_{1}\\
&\leq C( \|u_0\|_{p} \int_0^1 \frac{r^{N-1}}{r^{\frac{p(N-2s)}{p-1}}} dr+ \|u_0\|_{1}) \leq C( \|u_0\|_{p} + \|u_0\|_{1}) \,,
	\end{align*}
	where $\frac{p(N-2s)}{p-1}<N$ since $p>\frac{N}{2s}$ by assumption.

	  Finally, the same arguments of \cite[Lemma 3.2]{BBGM} (see Lemma \ref{LpPhi stability}), yield
	$$\int_M u_0(y)\, \GH(x,y)\dg \geq \left(1\wedge r(x_0,x)^{N-2s}\right)\GH(x,x_0)\,\|u_0\|_1$$
	and \eqref{pointwise} follows by combining the first line of formula \eqref{pointwise Lp estimate_low} with the above estimates. $ \Box$

\section{Appendix}
\begin{proof}[Proof of Theorem \ref{fracnash}]
It is known from \cite[Theorem 2.4.2]{DA}, via duality and the semigroup property, that \eqref{Nash} is equivalent to the bound
\begin{equation}\label{ultra}
\|T_t\|_{1,\infty}\le C t^{-\frac N2}\ \ \ \forall t>0,
\end{equation}
for a suitable $C>0$, where $T_t=e^{t\Delta}$ is the (non-fractional) heat semigroup and $\|\cdot\|_{1,\infty}$ is the operator norm from $L^1$ to $L^\infty$. Let us consider the fractional heat semigroup $T_t^{(s)}=e^{-t(-\Delta)^s}$, where the operator power is taken as usual in the spectral sense. It follows, see e.g. \cite[Section 4.3]{J} and references quoted, that, on a suitable core of functions:
\begin{equation}\label{subordinato}
T_t^{(s)}(u)=\int_0^{+\infty}\text{d}v\,T_v(u)\,g_t^{(s)}(v),
\end{equation}
where $g_t^{(\alpha)}>0$ is characterized by the fact that its (one-sided) Laplace transform is $e^{-tx^s}$, i.e.:
\[
L\left(g_t^{(s)}\right)(x):=\int_0^{+\infty}\text{d}v\,e^{-vx}\,g_t^{(s)}(v)=e^{-tx^s}\ \ \ \forall x\ge0,
\]
see again \cite[Section 3.9]{J} for further details on subordination of semigroups. It follows from \eqref{ultra} and \eqref{subordinato} that:
\begin{equation}\label{sub-bound}
\|T_t^{(s)}\|_{1,\infty}\le \int_0^{+\infty}\text{d}v\,\frac{g_t^{(s)}(v)}{v^{\frac N2}}.
\end{equation}
We shall now show that the latter integral is finite and determine its dependence on $t$. In order to do this, let us write, for any $\beta>0$:
\[\begin{aligned}
+\infty&>\int_0^{+\infty}x^{\beta-1}e^{-tx^s}\,\text{d}x=\int_0^{+\infty}\text{d}x\,x^{\beta-1}\int_0^{+\infty}\text{d}v\,e^{-xv}\,g_t^{(s)}(v)\\
&=\int_0^{+\infty}\text{d}v\,g_t^{(s)}(v)\int_0^{+\infty}\text{d}x\,x^{\beta-1}e^{-xv}\\
&=\int_0^{+\infty}\text{d}v\,g_t^{(s)}(v)\int_0^{+\infty}\frac{\text{d}y}{v}\left(\frac yv\right)^{\beta-1}e^{-y}\\
&=\int_0^{+\infty}\text{d}v\,\frac{g_t^{(s)}(v)}{v^\beta}\int_0^{+\infty}\text{d}y\,y^{\beta-1}e^{-y}\\
&=c_\beta^{-1}\int_0^{+\infty}\text{d}v\,\frac{g_t^{(s)}(v)}{v^\beta},
\end{aligned}
\]
where the second step follows by Fubini's Theorem, in the third one we set $y=xv$ and, finally, we set $c_\beta^{-1}:=\int_0^{+\infty}\text{d}y\,y^{\beta-1}e^{-y}$. Therefore,  $\int_0^{+\infty}\text{d}v\,\frac{g_t^{(s)}(v)}{v^\beta}<+\infty$ for all $\beta>0$ and we have:
\begin{equation}\label{power}
\int_0^{+\infty}\text{d}v\,\frac{g_t^{(s)}(v)}{v^\beta}=c_\beta\int_0^{+\infty}\text{d}x\,x^{\beta-1}e^{-tx^s}.
\end{equation}
Let us now consider the integral in the r.h.s. of \eqref{power}, that we can rewrite as follows, setting $x=yt^{-1/s}$:
\begin{equation}\label{num}\begin{aligned}
&\int_0^{+\infty}\text{d}x\,x^{\beta-1}e^{-tx^s}=\int_0^{+\infty}\frac{\text{d}y}{t^{\frac1s}}\,\left(\frac y{t^{\frac1s}}\right)^{\beta-1}e^{-y^s}\\\
&=\frac1{t^{\frac\beta s}}\int_0^{+\infty}\text{d}s\,y^{\beta-1}e^{-y^s}=\frac{\tilde c_\beta}{t^{\frac\beta s}},
\end{aligned}\end{equation}
where $\tilde c_\beta=\int_0^{+\infty}\text{d}s\,y^{\beta-1}e^{-y^s}$. Therefore, by \eqref{sub-bound}, \eqref{power} and \eqref{num} we have:
\begin{equation*}
\begin{aligned}
\|T_t^{(s)}\|_{1,\infty}\le \int_0^{+\infty}\text{d}v\,\frac{g_t^{(s)}(v)}{v^{\frac N2}}=c_{d/2}\int_0^{+\infty}\text{d}x\,x^{\frac d2-1}e^{-tx^s}=\frac{c_{d/2}\tilde c_{d/2}}{t^{\frac d{2s}}}=\frac{k}{t^{\frac d{2s}}}
\end{aligned}
\end{equation*}
for a suitable $k>0$ and for all $t>0$. Therefore, we can again apply \cite[Theorem 2.4.2]{DA} and get the validity of the fractional Nash inequality \eqref{Nash-frac}.

\end{proof}

\par\bigskip\noindent
\textbf{Acknowledgments.}
The first and third authors are members of the Gruppo Nazionale per l'Analisi Matematica, la Probabilit\`a e le loro Applicazioni (GNAMPA, Italy) of the Istituto Nazionale di Alta Matematica (INdAM, Italy). The research of the first author was carried out within the project: Geometric-Analytic Methods for PDEs and Applications (GAMPA), ref. 2022SLTHCE - funded by European Union - Next Generation EU within the PRIN 2022 program (D.D. 104 - 02/02/2022 Ministero dell'Universit\'a e della Ricerca).  The research of the third author was carried out within the project: Partial differential equations and related geometric-functional inequalities, ref. 20229M52AS - funded by European Union - Next Generation EU within the PRIN 2022 program (D.D. 104 - 02/02/2022), Ministero dell'Universit\'a e della Ricerca). This manuscript reflects only the authors' views and opinions and the Ministry cannot be considered responsible for them. \par  The authors are partially supported by the Projects PID2020-113596GB-I00 and PID2023-150166NB-I00 (Spanish Ministry of Science and Innovation). The second author acknowledges the financial support from the Spanish Ministry of Science and Innovation, through the ``Severo Ochoa Programme for Centres of Excellence in R\&D'' (CEX2019-000904-S and CEX2023-001347-S) and by the E.U. H2020 MSCA programme, grant agreement 777822.

%
%
%
%


\end{document}